\documentclass[12pt]{article}


\usepackage{graphicx}
\usepackage{nameref}
\usepackage[shortlabels,inline]{enumitem}
\usepackage{empheq}
\usepackage[shortlabels]{enumitem}
\setlist[enumerate]{nosep}
\usepackage[doc]{optional}
\usepackage{xcolor}
\usepackage[colorlinks=true,
            linkcolor=refkey,
            urlcolor=lblue,
            citecolor=red]{hyperref}
\usepackage{float}
\usepackage{soul}
\usepackage{graphicx}
\definecolor{labelkey}{rgb}{0,0.08,0.45}
\definecolor{refkey}{rgb}{0,0.6,0.0}
\definecolor{Brown}{rgb}{0.45,0.0,0.05}
\definecolor{lime}{rgb}{0.00,0.8,0.0}
\definecolor{lblue}{rgb}{0.5,0.5,0.99}

 \usepackage{mathpazo}



\colorlet{hlcyan}{cyan!30}

\usepackage{stmaryrd}
\usepackage{amssymb}
\oddsidemargin -0.1cm
\textwidth  16.5cm
\topmargin  -0.1cm
\headheight 0.0cm
\textheight 21.2cm
\parindent  4mm
\parskip    10pt 
\tolerance  3000

\hyphenation{non-empty}

\makeatletter
\def\namedlabel#1#2{\begingroup
   \def\@currentlabel{#2}%
   \label{#1}\endgroup
}
\makeatother

\oddsidemargin -0.1cm
\textwidth  16.5cm
\topmargin  -0.1cm
\headheight 0.0cm
\textheight 21.2cm
\parindent  4mm
\parskip    10pt 
\tolerance  3000

\newcommand{\To}{\ensuremath{\rightrightarrows}}

\newcommand{\fenv}[1]%
{\ensuremath{\,\overrightarrow{\operatorname{env}}_{#1}}}
\newcommand{\benv}[1]%
{\ensuremath{\,\overleftarrow{\operatorname{env}}_{#1}}}

\newcommand{\scal}[2]{\left\langle{#1},{#2}  \right\rangle}

\newcommand{\RR}{\ensuremath{\mathbb R}}

\newcommand{\dom}{\ensuremath{\operatorname{dom}}}

\newcommand{\zer}{\ensuremath{\operatorname{zer}}}

\newcommand{\Fix}{\ensuremath{\operatorname{Fix}}}
\newcommand{\Id}{\ensuremath{\operatorname{Id}}}



%
{\begin{list}{}{%
\settowidth{\labelwidth}{\textrm{#1~}}%
\setlength{\leftmargin}{\labelwidth+\labelsep}}}
{\end{list}}
\usepackage{amsthm}
\usepackage[capitalize,nameinlink]{cleveref}
\crefname{equation}{}{equations}
\crefname{chapter}{Appendix}{chapters}
\crefname{item}{}{items}
\crefname{enumi}{}{}
\newtheorem{theorem}{Theorem}[section]
\newtheorem{lemma}[theorem]{Lemma}

\newtheorem{proposition}[theorem]{Proposition}

\newtheorem{definition}[theorem]{Definition}


\newtheorem{remark}[theorem]{Remark}

\providecommand{\norm}[1]{\lVert#1\rVert}

\providecommand{\RR}{\mathbb{R}}

\providecommand{\dom}{\operatorname{dom}}

\newcommand{\fix}{\ensuremath{\operatorname{Fix}}}

\providecommand{\gra}{\operatorname{gra}}
\providecommand{\Id}{\operatorname{{ Id}}}

\providecommand{\To}{\rightrightarrows}

\providecommand{\fix}{\operatorname{Fix}}

\providecommand{\Id}{\operatorname{Id}}

\providecommand{\zer}{\operatorname{zer}}

\providecommand{\RR}{\mathbb{R}}

\definecolor{myblue}{rgb}{.8, .8, 1}
  \newcommand*\mybluebox[1]{%
    \colorbox{myblue}{\hspace{1em}#1\hspace{1em}}}

\allowdisplaybreaks 

\begin{document}

%

\title{\textsc{Compositions of Resolvents: Fixed Points Sets and Set of Cycles }}
\author{
Salihah Thabet Alwadani\thanks{
Mathematics, Royal Commission Yanbu Colleges And Institutes, 46455, Yanbu,
Saudi Arabia. E-mail:
\texttt{salihah.s.alwadani@gmail.com}.}
}

\date{June 3, 2024 }
\maketitle

\vskip 8mm

\begin{abstract} 
In this paper, we investigate the cycles and fixed point sets of compositions of resolvents using Attouch--Th{\'e}ra duality. We demonstrate that the cycles defined by the resolvent operators can be formulated in Hilbert space as the solution to a fixed point equation. Furthermore, we introduce the relationship between these cycles and the fixed point sets of the composition of resolvents.
\end{abstract}

{\small
\noindent
{\bfseries 2020 Mathematics Subject Classification:}
{Primary 47H09, 47H05; Secondary 47A06, 90C25
}

\noindent {\bfseries Keywords:}
Displacement mapping, Attouch--Th{\'e}ra duality, 
maximally monotone operator, nonexpansive mapping, 
, Fixed point set,  resolvent operator,
set-valued 
 inverse.
}

\maketitle

\section{Introduction}
Throughout, we assume that 
\begin{empheq}[box=\mybluebox]{equation*}
\text{$X$ is
a real Hilbert space with inner product
$\scal{\cdot}{\cdot}\colon X\times X\to\RR$, }
 \end{empheq}    
and induced norm $\|\cdot\|\colon X\to\RR\colon x\mapsto \sqrt{\scal{x}{x}}$. For more details about Hilbert space we refere the redear to \cite{berberian1961introduction} and \cite{BC2017}. An operator $T: X \to X$ is \emph{nonexpansive} if it is Lipschitz continuous with constant $1$, i.e.,
 \begin{equation}\label{footnote11}
 \big(  \forall x \in X\big) \big( \forall y \in X \big) \ \ \ \norm{Tx - Ty} \leq \norm{x-y}.
 \end{equation}
Nonexpansive operators have a major role in optimization because of the set of fixed points $\fix R := \{  x \in X \mid x = Rx \}$ usually represents solutions to inclusion problems and optimization. For more details about the nonexpansive operators and the fixed point set we refer the reader to \cite{BC2017, bauschke1996approximation, alwadani2020fixed, alwadani2020projection, alwadani2020proximal, bauschke1993convergence, bauschke1994dykstra, bauschke1996approximation, bauschke1997method, BCL2004, cegielski2012iterative, cheney1959proximity}
and \cite[Chapter 3 and Chapter 6]{alwadani2021thesis}.
Moreover, $T: D \to X$ is \emph{firmly nonexpansive} if 
\begin{equation}
\big( \forall x \in D\big) \big( \forall y \in D\big) \ \ \norm{Tx - Ty}^2+ \norm{( \Id - T) x - ( \Id - T) y}^2 \leq \norm{x - y}^2.
\end{equation}
Firmly nonexpansive operators are also central because of nice convergence properties for iterates and their correspndence with maximal monotone operators. Recall that a set-valued operator 
 $A: X \To X$ with graph $\gra A$ is \emph{monotone} if 
 \begin{equation*}
 \big(  \forall \big(   x, u\big) \in \gra A \big) \big(  \forall \big(   y, v\big) \in \gra A \big) \ \ \ \ \  \scal{x-y}{u-v} \geq 0.
 \end{equation*}
Furthermore, $A$ is \emph{maximally monotone} if there exists no monotone operator $B: X \To X$ such that $\gra B$ properly contains $\gra A$, i.e., for every $(x, u) \in X \times X$, 
 \begin{equation*}
 \big(  x, u\big) \in \gra A \ \ \Leftrightarrow \ \ \big( \forall \big( y,v \big) \in \gra A \big) \ \ \scal{x-y}{u-v} \geq 0.
 \end{equation*}
 It is well known that monotone and maximally monotone operators have central roles in various areas of modern nonlinear analysis, see \cite{BC2017}, \cite{borwein2010convex}, \cite{burachik2008enlargements}, \cite{MM}, \cite{MMN}, \cite{zeidler2013nonlinear1}, \cite{zeidler2013nonlinear2}, \cite{zeidler2013nonlinear3}, and \cite{martinez2005monotone} for background meterial. 
 Let $A:X \To X$ be maximally monotone operator and denote the associated  \emph{resolvent} by
\begin{equation}\label{quli1}
J_{A} := (\Id + A)^{-1}.
\end{equation}
In \cite{minty1962monotone}, Minty observed that $J_{A}$ is a firmly nonexpansive operators from $X$ to $X$. For more information about the relation between firmly nonexpansive mappings and maximally monotone operators, see \cite{bauschke2012firmly}. 
The Hilber product space, 
 \begin{empheq}[box=\mybluebox]{equation*}
 \text{$\mathbf{X} =  \Big\{  \mathbf{x} = (x_i)_{i \in I} \mathrel{\Big|} ( \forall i \in I) \ x_i \in X \Big\}$, }
 \end{empheq}
 where $m \in \{ 2, 3, \dots  \}$ and $ i = \{  1, 2, \dots, m\}$. Let 
\begin{empheq}[box = \mybluebox]{equation}
\text{$A_{i}: X \To X$ be maximally monotone operators,} 
\end{empheq}
with resolvents $\textrm{J}_{A_1}$, $\textrm{J}_{A_2}, \dots, \textrm{J}_{A_m}$ which we also write more simply as $\textrm{J}_{1}$, $\textrm{J}_{2}, \dots, \textrm{J}_{m}$. 
Set
\begin{empheq}[box = \mybluebox]{equation}\label{rea1}
\text{$\mathbf{A}= A_1 \times A_2 \times \dots \times A_{m}$.} 
\end{empheq}
Then
\begin{equation}\label{res111}
\mathbf{\textrm{J}_{A}}: \mathbf{X} \to \mathbf{X} : \big(x_1, x_2, \dots, x_m\big)\mapsto \big( \textrm{J}_{1} x_1, \textrm{J}_{2} x_2, \dots, \textrm{J}_{m} x_m \big).
\end{equation} 
Define the \emph{circular right-shift operator}
\begin{empheq}[box = \mybluebox]{equation}\label{yy7}
\text{$\mathbf{R}: \big( x_1, x_2, \dots, x_m \big) \mapsto \big(x_m, x_1, x_2, \dots, x_{m-1}\big)$.} 
\end{empheq}
Define the \emph{fixed point sets of the cyclic compositions of resolvants}:
\begin{align}
& F_1 := \fix ( \textrm{J}_1 \textrm{J}_{m} \dots \textrm{J}_2 ), \label{f1}\\
& F_2 := \fix ( \textrm{J}_2 \textrm{J}_1 \textrm{J}_m \dots \textrm{J}_3), \\
&  \ \ \ \ \ \vdots  \\
& F_m := \fix ( \textrm{J}_m \textrm{J}_{m-1} \dots \textrm{J}_1).\label{fm}
\end{align}

\begin{definition}\label{cydef}\normalfont\cite[Definition 5.1]{alwadani2021thesis}
Let $z_1 \in F_1$.  Set $z_2:= \textrm{J}_{2} z_1$, $z_3:= \textrm{J}_3 z_2$, $\cdots $, $z_{m-1}:= \textrm{J}_{m-1} z_{m-2},$ and $z_m:= \textrm{J}_{m} z_{m-1}$. The truple $\mathbf{z} = \big(z_1, z_2, \dots, z_m\big) \in  \mathbf{X}$ is called \emph{a cycle}.
\end{definition}
\section{Aim and outline of this paper}
Our main results can be summarized as follows
 \begin{itemize}
 \item \cref{toon1} and  \cref{hygtfre1} sketche the  relationship between the cycles and the fixed point sets of the composition of resolvants. 
  \item The cycles that are defined by the resolvant operators can be formulated in Hilbert product space as a solution to a fixed point equation (see \cref{equationfixed} and \cref{wewewew22}). 
  \item We study the set of classical cycles that are defined by using resolvant operators and the set of classical gap vectors see \cref{drswas}. 
  \item If one of the fixed point sets of composition of resolvents is not empty, then the individuals fixed point sets are equal and their intersection is not empty (see \cref{ggdwe2}). 
 
 \item In \cref{dssesrs}, we use Attouch--Th{\'e}ra duality to study the cycles and the fixed point sets of compositions of resolvents operators.
  \end{itemize}

 \section{Attouch--Th{\'e}ra duality}
Let $A$ and $B$ be two maximally monotone operators on $X$. The \emph{primal problem} associated with $ \big( A, B \big)$ is to
\begin{empheq}[box=\mybluebox]{equation}\label{Sum1}
\text{ find $x \in X$ such that $0 \in Ax + Bx$.}
 \end{empheq}
 The set of \emph{primal solutions} associated with $\big( A,B \big)$ are the solutions to the corresponding sum problem \cref{Sum1} are defined as 
\begin{empheq}[box=\mybluebox]{equation}\label{SumSol}
\text{psol$( A, B):=$ zer$( A, B) = ( A, B)^{-1} (0) = \Big\{ x \in X   \mathrel{\Big|}  0 \in (A+B) x \Big\}$}
 \end{empheq} 
 Now define $B^{\ovee}:= (- \Id ) \circ  B \circ (- \Id)$ and $B^{-\ovee} := (B^{-1})^{\ovee} = (B^{\ovee})^{-1}$. This allows us to define the \emph{dual pair} of $(A, B)$:
 \begin{equation}
 (A, B)^{*} := ( A^{-1}, B^{- \ovee} ).
 \end{equation}
 Then the \emph{dual problem} associated with $(A, B)$ is defined to be the primal problem associated with the  dual pair $( A^{-1}, B^{-\ovee} )$:
 \begin{empheq}[box=\mybluebox]{equation}\label{Sum2}
\text{ find $y \in X$ such that $0 \in A^{-1}y + B^{-\ovee}y = A^{-1} y - B^{-1} (-y)$.}
 \end{empheq}
 The set of of \emph{dual solutions} associated with $\big( A,B \big)$ are the solutions to the corresponding sum problem \cref{Sum2}: 
\begin{empheq}[box=\mybluebox]{equation}\label{SumSol2}
\text{dsol$( A, B):=$ psol$( A, B)^{*}=$ zer $( A^{-1}+  B^{-\ovee})  = \Big\{ y \in X   \mathrel{\Big|}  0 \in (A^{-1}+B^{-\ovee}) y \Big\}.$}
 \end{empheq}  
 Because $(A^{-1})^{-1} = A$, $(A^{\ovee})^{\ovee} = A$, and $(A^{-\ovee})^{-\ovee} = A$, we have 
 \begin{equation}
 (A, B)^{**} = (A, B).
 \end{equation}
 
  \begin{lemma}\normalfont Let $A$ and $B$ be maximally monotone on $X$. Let $x$ and $y$ in $X$. Then the following hold:
  \begin{enumerate}
  \item\label{ton1} If $\text{psol}(A,B) = \{ x\}$, then $$\text{dsol}(A,B) = Ax \cap (-Bx),$$ and $$ \text{dsol}(A,B) = Ax \cap B^{\ovee} (-x) .$$
  \item\label{ton2} If $\text{dsol}(A,B) = \{ y\}$, then $$\text{psol}(A,B) = (A^{-1} y ) \cap B^{-1} (-y),$$ and $$\text{psol}(A, B) =(A^{-1}y) \cap (-B^{-\ovee} (y)) . $$
  \item\label{ton3} If $\text{psol}(A,B) = \{ x\}$ and $Ax$ is a singelton, then $ \text{dsol}(A,B) = Ax.$
  \item\label{ton4} If $\text{psol}(A,B) = \{ x\}$ and $B x$ and $B^{\ovee} (-x)$ are singelton, then $$ \text{dsol}(A,B) = -Bx,$$ and  $$ \text{dsol}(A,B) = B^{\ovee} (-x) . $$
  \item\label{ton5} If $\text{dsol}(A,B) = \{ y\}$ and $A^{-1}y$ is a singelton, then $ \text{psol}(A,B) = A^{-1} y$.
  \item\label{ton6} If $\text{dsol}(A,B) = \{ y\}$ and $B^{-1} (-y)$ and $(- B^{-\ovee} (y))$ are  a singelton, then $$ \text{psol}(A,B) = B^{-1} (-y),$$ 
  and 
 $$   \text{psol}(A,B) = ( - B^{-\ovee} (y) ) .$$
  \end{enumerate}
  \end{lemma}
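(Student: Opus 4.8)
The plan is to reduce everything to two elementary identities for the operations $\ovee$ and $-\ovee$ together with a single ``witness'' characterization of the dual solution set, after which all six items follow from double inclusions and the uniqueness hypotheses. First I would record the pointwise identities $B^{\ovee}(-x) = -Bx$ and $-B^{-\ovee}(y) = B^{-1}(-y)$, which come straight from $B^{\ovee} = (-\Id)\circ B \circ(-\Id)$ and $B^{-\ovee} = (B^{\ovee})^{-1}$; these immediately show that the two formulas in each of (i), (ii), (iv), (vi) are the same set written two ways, so only one formula per item needs a genuine argument. Next I would unwind the definition $\text{dsol}(A,B) = \zer(A^{-1}+B^{-\ovee})$, using $B^{-\ovee}y = -B^{-1}(-y)$, into the characterization
\[
y \in \text{dsol}(A,B) \iff (\exists\, w \in X)\ y \in Aw \ \text{and}\ {-y} \in Bw,
\]
and symmetrically, $x \in \text{psol}(A,B)$ iff there is $u$ with $u \in Ax$ and $-u \in Bx$.

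The core of the argument is item (i). For the inclusion $Ax \cap (-Bx) \subseteq \text{dsol}(A,B)$, I take $y \in Ax \cap (-Bx)$, so $y \in Ax$ and $-y \in Bx$, and the witness $w := x$ shows $y \in \text{dsol}(A,B)$. For the reverse inclusion I take $y \in \text{dsol}(A,B)$ with witness $w$, so $y \in Aw$ and $-y \in Bw$; since $y + (-y) = 0$, this yields $0 \in Aw + Bw$, i.e.\ $w \in \text{psol}(A,B) = \{x\}$, forcing $w = x$ and hence $y \in Ax \cap (-Bx)$. This uniqueness step is exactly where the hypothesis $\text{psol}(A,B) = \{x\}$ is used. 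Item (ii) is the dual statement: since $(A,B)^{**} = (A,B)$ and $\text{dsol}(A,B) = \text{psol}\big((A,B)^{*}\big)$, one may either apply (i) to the dual pair $(A^{-1}, B^{-\ovee})$ or simply repeat the two-inclusion argument with the roles of $\text{psol}$ and $\text{dsol}$ interchanged, obtaining $\text{psol}(A,B) = A^{-1}y \cap B^{-1}(-y)$.

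Items (iii)--(vi) are then immediate consequences of (i) and (ii). The one point needing care is to upgrade a ``singleton-subset'' conclusion to an equality, which requires nonemptiness: from $\text{psol}(A,B) = \{x\}$ there is $u$ with $u \in Ax \cap (-Bx)$, so $\text{dsol}(A,B) \neq \emp$, and dually $\text{dsol}(A,B) = \{y\}$ forces $\text{psol}(A,B) \neq \emp$. Thus in (iii), $\text{dsol}(A,B) = Ax \cap (-Bx)$ is a nonempty subset of the singleton $Ax$, hence equals $Ax$; in (iv) it is a nonempty subset of the singleton $-Bx = B^{\ovee}(-x)$, hence equals both; and (v), (vi) follow identically from (ii), using that $A^{-1}y$, respectively $B^{-1}(-y) = -B^{-\ovee}(y)$, is a singleton. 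I do not expect any serious obstacle: the only substantive idea is the uniqueness-forces-the-witness step in (i)--(ii), and the remainder is bookkeeping with the $\ovee$ identities and the nonemptiness observation.
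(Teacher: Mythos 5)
Your proposal is correct and follows essentially the same route as the paper: both unwind the definitions of $\text{psol}$ and $\text{dsol}$ to obtain the intersection formulas in (i)--(ii) (dual solutions are exactly the elements of $Ax\cap(-Bx)$ produced by a primal witness, which uniqueness forces to be $x$), and then read off (iii)--(vi) from the singleton hypotheses. If anything, your write-up is more careful than the paper's on the two steps it leaves implicit --- the uniqueness-forces-the-witness argument in the reverse inclusion, and the nonemptiness observation needed to upgrade ``nonempty subset of a singleton'' to equality in (iii)--(vi).
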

  
  \begin{proof}
  \ref{ton1}:  From \cref{SumSol}, we have  \begin{align*}
 x \in \text{psol}(A, B) & \Leftrightarrow (A+ B)^{-1}  (0) \neq \emptyset \\
 & \Leftrightarrow  \emptyset \neq Ax \cap (- B x)  \\
 & \Leftrightarrow  \emptyset \neq  Ax \cap \Big( (- \Id)  \circ B \circ ( -  \Id) (-x)\Big)  \\
 &  \Leftrightarrow  \emptyset \neq  Ax \cap B^{\ovee} (-x)  \\
 &  \Leftrightarrow  \emptyset \neq Ax \cap B^{\ovee} (-x)  \subseteq \text{dsol}(A, B). \\
 &   \Leftrightarrow  \emptyset \neq Ax \cap (- B x) \subseteq \text{dsol}(A, B).
 \end{align*}
 Since $\text{psol}(A,B) = \{ x\}$ and by using \cref{SumSol}, we obtain $  Ax \cap (- B x)  = \text{dsol}(A, B), $
 and 
 $ Ax \cap B^{\ovee} (-x) =  \text{dsol}(A, B). $ \\
  \ref{ton2}: From \cref{SumSol2}, we have \begin{align*}
 y \in \text{dsol}(A, B) & \Leftrightarrow (A^{-1}+ B^{- \ovee})^{-1}  (0) \neq \emptyset \\
 & \Leftrightarrow A^{-1} (y) \cap (- B^{-\ovee} (y)) \neq \emptyset \\
 & \Leftrightarrow A^{-1} (y) \cap B^{-1} (-y) \neq  \emptyset \\
 &  \Leftrightarrow  \emptyset \neq A^{-1} (y) \cap B^{-1} (-y)  \subseteq \text{psol}(A, B).
 \end{align*}
 Since $\text{dsol}(A,B) = \{ y\}$ and by using \cref{SumSol2}, we obtain 
 $$  A^{-1} (y) \cap B^{-1} (-y)  = \text{psol}(A, B), $$
  \ref{ton3}: From \cref{ton1}, we have 
  $ Ax \cap (- B x)  = \text{dsol}(A, B),$
  and 
  $  Ax \cap B^{\ovee} (-x) =  \text{dsol}(A, B). $
  Since  $Ax$ is a singelton then we obtain
  $$  \text{dsol}(A, B) =  Ax \cap (- B x) = Ax,  $$
  and 
  $$ \text{dsol}(A, B)=  Ax \cap B^{\ovee} (-x) = Ax. $$
  \ref{ton4}: From \cref{ton1}, we have 
  $ Ax \cap (- B x)  = \text{dsol}(A, B),$
  and 
  $  Ax \cap B^{\ovee} (-x) =  \text{dsol}(A, B). $
  Since  $B x$ and $B^{\ovee}(-x) $ are singelton then we obtain
  $$  \text{dsol}(A, B) =  Ax \cap (- B x) = -B x,  $$
  and 
  $$ \text{dsol}(A, B)=  Ax \cap B^{\ovee} (-x) = B^{\ovee} (-x) . $$
  \ref{ton5}: From \cref{ton2}, we have 
  $ (A^{-1}y) \cap B^{-1} (-y)  = \text{psol}(A, B),$
  and 
  $   (A^{-1}y) \cap (-B^{-\ovee} (y)) =  \text{dsol}(A, B). $
  Since  $A^{-1} y$ is a singelton then we obtain
  $$  \text{psol}(A, B) =(A^{-1}y) \cap B^{-1} (-y) = A^{-1} y,  $$
  and 
  $$ \text{psol}(A, B) =(A^{-1}y) \cap (-B^{-\ovee} (y)) =  A^{-1} y. $$
  \ref{ton6}: From \cref{ton2}, we have 
  $ (A^{-1}y) \cap B^{-1} (-y)  = \text{psol}(A, B),$
  and 
  $   (A^{-1}y) \cap (-B^{-\ovee} (y)) =  \text{dsol}(A, B). $
  Since  $B^{-1} (-y) $ and $(-B^{-\ovee} (y))$  are  singelton then we obtain
  $$  \text{psol}(A, B) =(A^{-1}y) \cap B^{-1} (-y) =  B^{-1} (-y),  $$
  and 
  $$ \text{psol}(A, B) =(A^{-1}y) \cap (-B^{-\ovee} (y)) =  -B^{-\ovee} (y). $$
\end{proof}\\
For more information about the Attouch--Th{\'e}ra duality, we refer the reader to \cite{attouch1996general}.

\section{Correspondence of Properties and Results}
\begin{theorem}\label{toon1}\normalfont  The following are equivalent
\begin{enumerate}
\item\label{toon101} Cycle exists. 
\item\label{toon102} The fixed point sets of cyclic compositions of resolvants $F_i \neq \emptyset$ for all $1 \leq i \leq m$.
\end{enumerate}
\end{theorem}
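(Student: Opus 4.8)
The plan is to reduce the stated equivalence to a single observation: by \cref{cydef}, a cycle exists precisely when $F_1 \neq \emp$, since every cycle is built from a point $z_1 \in F_1$ and, conversely, any $z_1 \in F_1$ generates a cycle via the recursion $z_{j+1} := J_{j+1} z_j$. Thus \ref{toon101} is equivalent to ``$F_1 \neq \emp$'', and what remains is to upgrade ``$F_1 \neq \emp$'' to ``$F_i \neq \emp$ for all $i$''. With this in hand, the implication \ref{toon102} $\Rightarrow$ \ref{toon101} is immediate, because \ref{toon102} in particular gives $F_1 \neq \emp$.

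For the substantive direction \ref{toon101} $\Rightarrow$ \ref{toon102}, I would start from a cycle $\mathbf{z} = (z_1, \dots, z_m)$ and first record the \emph{closing relation} $z_1 = J_1 z_m$. This follows by unwinding the definitions: since $z_1 \in F_1 = \fix(J_1 J_m \cdots J_2)$ and $J_m \cdots J_2\, z_1 = z_m$ by the recursive definition of the $z_j$, we obtain $z_1 = J_1(J_m \cdots J_2\, z_1) = J_1 z_m$. Combined with $z_2 = J_2 z_1,\ z_3 = J_3 z_2,\ \dots,\ z_m = J_m z_{m-1}$, this gives the full set of cyclic relations $z_{j+1} = J_{j+1} z_j$ (indices read cyclically modulo $m$), which is now symmetric in $j$.

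Given these cyclic relations, showing $z_i \in F_i$ for each $i$ is a direct verification. Writing $T_i := J_i J_{i-1} \cdots J_1 J_m \cdots J_{i+1}$ and applying it to $z_i$, reading the factors from right to left, each resolvent advances the index by one around the cycle: $J_{i+1} z_i = z_{i+1}$, then $J_{i+2} z_{i+1} = z_{i+2}$, and so on, passing through the closing relation $J_1 z_m = z_1$ until the composition returns to $z_i$. Hence $z_i = T_i z_i$, i.e. $z_i \in F_i$, so every $F_i$ is nonempty.

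The only place requiring genuine care is establishing the closing relation $z_1 = J_1 z_m$ and confirming that the factors of each $T_i$ are exactly the cyclic rotation of those of $T_1$; once the cyclic relations are secured the remainder is bookkeeping. Conceptually this is the concrete manifestation of the general principle that $\fix(QP) \neq \emp \Leftrightarrow \fix(PQ) \neq \emp$, with $P(\fix(QP)) \subseteq \fix(PQ)$: taking $P = J_{i+1}$ and letting $Q$ denote the remaining composition so that $T_i = QP$ and $T_{i+1} = PQ$, one sees that $z_{i+1} = J_{i+1} z_i$ carries a fixed point of $T_i$ to one of $T_{i+1}$, which is precisely what the cycle encodes. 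Since each $J_{A_i}$ is single-valued and everywhere defined (being firmly nonexpansive by Minty's theorem), there are no domain or selection issues to address.
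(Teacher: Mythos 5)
Your proof is correct, and its core coincides with the paper's. For \ref{toon101}$\Rightarrow$\ref{toon102} you do exactly what the paper does: establish the cyclic relations $z_{j+1}=J_{j+1}z_j$ (indices read modulo $m$) and then read off $z_i = J_i J_{i-1}\cdots J_1 J_m \cdots J_{i+1}\, z_i$, so that $z_i \in F_i$ for every $i$. In fact your write-up is slightly more careful at the one delicate point: the paper simply asserts $z_1 = J_1 z_m$ ``by \cref{cydef}'', although that definition only provides the forward recursion starting from $z_1 \in F_1$; your derivation $z_m = J_m\cdots J_2\, z_1$, hence $J_1 z_m = (J_1 J_m \cdots J_2)\, z_1 = z_1$, supplies the missing justification for this closing relation. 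The genuine divergence is in \ref{toon102}$\Rightarrow$\ref{toon101}: you observe that, since a cycle is by \cref{cydef} precisely the tuple generated by any $z_1 \in F_1$, existence of a cycle is equivalent to $F_1 \neq \emp$, so this implication is immediate from the hypothesis. The paper instead takes $z_m \in F_m$ and walks around the loop, applying $J_1, J_2, \dots$ in turn to produce elements of $F_1, F_2, \dots$ and assembling the cycle coordinate by coordinate. The paper's longer route is not wasted effort, however: it effectively proves the stronger statement that nonemptiness of a \emph{single} $F_i$ already produces a cycle (and hence forces all $F_j \neq \emp$), which is the content recorded later in \cref{hygtfre1}~\ref{yy1}; your shortcut is perfectly valid for the equivalence as stated, but does not by itself yield that stronger conclusion.
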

\begin{proof}
" \ref{toon101}$\Rightarrow$\ref{toon102}": Let $\mathbf{z}= \big(  z_1, z_2, \dots, z_{m-1}, z_m \big)$ be a cycle. Then by \cref{cydef}, we have $z_1 = \textrm{J}_{1} z_m$, $z_2= \textrm{J}_{2} z_1$, $z_3= \textrm{J}_3 z_2$, $\cdots $, $z_{m-1}= \textrm{J}_{m-1} z_{m-2},$ and $z_m= \textrm{J}_{m} z_{m-1}$. This gives that 
\begin{align*} 
& z_1 =  \textrm{J}_{1}  \textrm{J}_{m} \dots  \textrm{J}_{3}  \textrm{J}_{2} z_1 \\
& z_2 =  \textrm{J}_{2}  \textrm{J}_{1} \dots  \textrm{J}_{4}  \textrm{J}_{3} z_2 \\
& \ \ \ \ \ \ \ \ \ \ \ \vdots \\
& z_i =  \textrm{J}_{i}  \textrm{J}_{i-1} \dots  \textrm{J}_{1}  \textrm{J}_{m}  \dots \textrm{J}_{i+1} z_i \\
& \ \ \ \ \ \ \ \ \ \ \ \vdots \\
& z_m =  \textrm{J}_{m}  \textrm{J}_{m-1} \dots  \textrm{J}_{2}  \textrm{J}_{1} z_m.
\end{align*}
Therefore,  $z_1 \in F_1$, $z_2 \in F_2$, $\dots$, $z_i \in F_i$, $\dots$, $z_m \in F_m$ by \cref{f1}-\cref{fm}. This implies that $F_1 \neq \emptyset$, $F_2 \neq \emptyset$, $\dots$, $F_i \neq \emptyset$, $\dots$, $F_m \neq \emptyset$.\\
" \ref{toon102}$\Rightarrow$\ref{toon101}": Let $F_m \neq \emptyset$ and $z_m \in F_m$. Then 
$$ z_m \in \fix \big(  \textrm{J}_{m}  \textrm{J}_{m-1} \dots  \textrm{J}_{2}  \textrm{J}_{1} \big)   z_m,$$
by \cref{fm}. Therefore, 
$$  z_m =   \textrm{J}_{m}  \textrm{J}_{m-1} \dots  \textrm{J}_{2}  \textrm{J}_{1} z_m $$
Next, Applying $ \textrm{J}_1$ we obtain 
$$ \textrm{J}_1  z_m = \textrm{J}_1  \big(  \textrm{J}_{m}  \textrm{J}_{m-1} \dots  \textrm{J}_{2} \big) \big( \textrm{J}_{1} z_m \big), $$
which is equivalent to 
$ \textrm{J}_1  z_m \in \fix \big( \textrm{J}_{1}  \textrm{J}_{m} \textrm{J}_{m-1} \dots  \textrm{J}_{3}  \textrm{J}_{2} \big) \Leftrightarrow  \textrm{J}_1 z_m \in F_1 \neq \emptyset $. Additionaly, let $z_2 \in F_2$ such taht $z_2 = \textrm{J}_{2} z_1$. Keep doing this we obtain, 
$$ z_{m-2} =  \textrm{J}_{m-2}  \textrm{J}_{m-3}  \dots  \textrm{J}_{1}  \textrm{J}_{m} \textrm{J}_{m-1} z_{m-2}.$$
Then, 
$$   \textrm{J}_{m-1}  z_{m-2} =  \textrm{J}_{m-1} \big( \textrm{J}_{m-2}  \textrm{J}_{m-3}  \dots  \textrm{J}_{1}  \textrm{J}_{m} \big) \big( \textrm{J}_{m-1} z_{m-2} \big),$$
and 
$  \textrm{J}_{m-1}  z_{m-2}  \in \fix \big(   \textrm{J}_{m-1}  \textrm{J}_{m-2}  \textrm{J}_{m-3}  \dots  \textrm{J}_{1}  \textrm{J}_{m} \big)$, which is equivalent to $ \textrm{J}_{m-1}  \textrm{J}_{m-2} \in F_{m-1} \neq \emptyset$. Moreover, let $z_{m-1} \in F_{m-1}$ such that $z_{m-1} = \textrm{J}_{m-1} z_{m-2}$. Then we obtain 
$$   z_{m-1} =  \textrm{J}_{m-1}  \textrm{J}_{m-2}   \textrm{J}_{m-3}\dots  \textrm{J}_{1}  \textrm{J}_{m}  z_{m-1},$$
and 
$$  \textrm{J}_{m}   z_{m-1} =   \textrm{J}_{m}  \big( \textrm{J}_{m-1}  \textrm{J}_{m-2}   \textrm{J}_{m-3}\dots  \textrm{J}_{1} \big)  \textrm{J}_{m}  z_{m-1}. $$
Therefore, 
$$  \textrm{J}_{m}   z_{m-1} \in \fix   \big( \textrm{J}_{m}  \textrm{J}_{m-1} \textrm{J}_{m-2} \textrm{J}_{m-3} \dots  \textrm{J}_{1} \big) . $$
This is equivalent to $  \textrm{J}_{m}   z_{m-1} \in \fix  F_m \neq \emptyset$. All these together give $\big(  z_1, z_2, \dots,  z_{m-1}, z_m \big) \in \mathbf{X}$ satisfying that 
$$  \big(  z_1, z_2,  \dots,  z_{m-1}, z_m \big)  = \big( \textrm{J}_{1} z_m, \textrm{J}_{2} z_1, \dots, \textrm{J}_{m-1} z_{m-2}, \textrm{J}_{m} z_{m-1}\big) .$$

\end{proof}
\begin{lemma}\label{equationfixed}\normalfont
Let $\mathbf{z} \in \mathbf{X}$ is a cycle. Then 
\begin{equation}\label{ddq}
\mathbf{z} = \mathbf{J}_{\mathbf{A}} \big( \mathbf{Rz} \big),
\end{equation}
and solving \cref{ddq} is equivalent to solve
\begin{equation}
0 \in \mathbf{A} (\mathbf{z}) + \big( \mathbf{\Id} - \mathbf{R}\big) (\mathbf{z}).
\end{equation}
\end{lemma}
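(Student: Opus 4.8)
The plan is to verify the two equations directly from the definition of a cycle (\cref{cydef}) together with the elementary characterization of the resolvent; the computation is short, so the emphasis is on unwinding the notation correctly rather than on any analytic difficulty.

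First I would record the one nonobvious consequence of $\mathbf{z}$ being a cycle, namely the wraparound relation $z_1 = \textrm{J}_1 z_m$. This is implicit in \cref{cydef}: since $z_1 \in F_1 = \fix(\textrm{J}_1 \textrm{J}_m \cdots \textrm{J}_2)$ and $z_2 = \textrm{J}_2 z_1,\ z_3 = \textrm{J}_3 z_2,\ \dots,\ z_m = \textrm{J}_m z_{m-1}$, applying the inner resolvents successively gives $\textrm{J}_m \textrm{J}_{m-1}\cdots \textrm{J}_2 z_1 = z_m$, whence $z_1 = \textrm{J}_1 z_m$. This is exactly the relation already exploited in the proof of \cref{toon1}. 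Thus the complete set of cycle relations is $z_1 = \textrm{J}_1 z_m$ together with $z_i = \textrm{J}_i z_{i-1}$ for $2 \le i \le m$.

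Next I would establish \cref{ddq} by a direct substitution. By the definition of the circular right-shift \cref{yy7}, $\mathbf{R}\mathbf{z} = (z_m, z_1, z_2, \dots, z_{m-1})$, and by the coordinatewise action of the product resolvent \cref{res111},
\[
\mathbf{J}_{\mathbf{A}}(\mathbf{R}\mathbf{z}) = \big(\textrm{J}_1 z_m,\ \textrm{J}_2 z_1,\ \dots,\ \textrm{J}_m z_{m-1}\big).
\]
Feeding in the cycle relations from the previous step, each coordinate collapses to $z_i$, so $\mathbf{J}_{\mathbf{A}}(\mathbf{R}\mathbf{z}) = (z_1, \dots, z_m) = \mathbf{z}$, which is precisely \cref{ddq}.

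For the equivalence I would use $\mathbf{J}_{\mathbf{A}} = (\mathbf{\Id} + \mathbf{A})^{-1}$ (the product resolvent, cf.\ \cref{quli1}) to convert the fixed-point equation into an inclusion:
\[
\mathbf{z} = \mathbf{J}_{\mathbf{A}}(\mathbf{R}\mathbf{z}) \iff \mathbf{R}\mathbf{z} \in (\mathbf{\Id} + \mathbf{A})\mathbf{z} \iff \mathbf{R}\mathbf{z} - \mathbf{z} \in \mathbf{A}\mathbf{z} \iff 0 \in \mathbf{A}\mathbf{z} + (\mathbf{\Id} - \mathbf{R})\mathbf{z}.
\]
I do not foresee a genuine obstacle here: the whole argument is substitution plus the identity $p = J_A x \Leftrightarrow x - p \in Ap$ applied in the product space. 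The only point needing care is that the wraparound identity $z_1 = \textrm{J}_1 z_m$ comes from $z_1 \in F_1$ and not from the forward recursion, so it must be invoked explicitly rather than read off from the recursive part of \cref{cydef}.
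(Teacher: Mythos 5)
Your proposal is correct and follows essentially the same route as the paper's proof: direct substitution of the cycle relations into the coordinatewise formula for $\mathbf{J}_{\mathbf{A}}\mathbf{R}$, followed by the standard resolvent characterization $\mathbf{z} = (\mathbf{\Id}+\mathbf{A})^{-1}(\mathbf{Rz}) \Leftrightarrow \mathbf{Rz}-\mathbf{z} \in \mathbf{A}\mathbf{z}$. The only difference is that you explicitly derive the wraparound relation $z_1 = \textrm{J}_1 z_m$ from $z_1 \in F_1$ and the forward recursion, whereas the paper reads it off from \cref{cydef} as immediate; your extra care here is a small improvement in rigor, not a different argument.
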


\begin{proof}
Given that $\mathbf{z} = \big( z_1, z_2, \dots, z_m  \big) \in \mathbf{X}$ is a cycle. Then  \cref{cydef} gives that $z_1 = \textrm{J}_{1} z_m$, $z_2= \textrm{J}_{2} z_1$, $z_3= \textrm{J}_3 z_2$, $\cdots $, $z_{m-1}= \textrm{J}_{m-1} z_{m-2},$ and $z_m= \textrm{J}_{m} z_{m-1}$. Hence, 
\begin{align*}
\mathbf{z}= \big( z_1, z_2, \dots, z_{m-1} , z_m\big)& = \big(  \textrm{J}_{1} z_m,  \textrm{J}_{2} z_1, \dots,  \textrm{J}_{m-1} z_{m-2}, \textrm{J}_{m} z_{m-1} \big) \\
& = \big( \textrm{J}_{1}, \textrm{J}_{2}, \dots, \textrm{J}_{m-1}, \textrm{J}_{m} \big) \big(  z_m, z_1, \dots, z_{m-2}, z_{m-1}\big) \\
& = \mathbf{J}_{\mathbf{A}} \big(   \mathbf{R} \big( z_1, z_2, \dots, z_{m-1}, z_{m} \big)\big) \\
& = \mathbf{J}_{\mathbf{A}} \big(   \mathbf{R z} \big).
\end{align*}
Note that $ \mathbf{z} = \mathbf{J}_{\mathbf{A}} (\mathbf{Rz}) \Leftrightarrow \mathbf{z} = \big(  \mathbf{Id} + \mathbf{A}\big)^{-1} ( \mathbf{Rz})$ by \cref{quli1}. Therefore, we obtain 
$$  \mathbf{Rz} \in \mathbf{z} + \mathbf{A} (\mathbf{z}) \Leftrightarrow 0 \in \mathbf{A} ( \mathbf{z}) + \big(  \mathbf{Id} - \mathbf{R}\big)  ( \mathbf{z}).$$
\end{proof}

Define the set of all cycles by 
\begin{empheq}[box=\mybluebox]{equation}\label{nnote11}
\text{ $\mathbf{Z}:= \fix (\mathbf{J}_{\mathbf{A}} \mathbf{R}).$ }
 \end{empheq}
 
Define
\begin{equation}
F_{i}:=\big\{ z \in X \mid  z = \textrm{J}_{i} \dots \textrm{J}_{1} \textrm{J}_{m} \dots \textrm{J}_{i+1} z \big\}. 
\end{equation}
Moreover,
\begin{equation}
Q_i: \mathbf{X} \to X: \mathbf{z} \mapsto z_i.
\end{equation}	
The relationship between the fixed point set of composition of $m$ resolvants $F_i$'s and the set of all cycles $\mathbf{Z}$ are given in the following theorem. 
\begin{theorem}\label{hygtfre1}\normalfont For every $1 \leq i, j \leq m$, the following hold;
\begin{enumerate}
\item\label{ff1} $F_i$	are closed and convex. Moreover, 
\begin{small}
\begin{align}
& F_{m}  = \big(  \textrm{J}_{m}  \textrm{J}_{m-1} \dots  \textrm{J}_{3}  \textrm{J}_{2}\big) \big( F_1 \big) =  \big(  \textrm{J}_{m}  \textrm{J}_{m-1} \dots  \textrm{J}_{3}  \big) \big( F_2 \big) = \dots = \textrm{J}_{m}  \textrm{J}_{m-1} \big( F_{m -2}\big) = \textrm{J}_{m} \big( F_{m-1}\big). \\
& F_{m-1} = \big( \textrm{J}_{m-1} \dots  \textrm{J}_{3} \textrm{J}_{2} \textrm{J}_{1} \big) \big( F_m \big) = \big( \textrm{J}_{m-1} \dots  \textrm{J}_{3} \textrm{J}_{2} \big) \big( F_1 \big) = \dots = \textrm{J}_{m-1} \big( F_{m-2}\big).\label{991}\\
& \vdots \\
& F_2 = \big( \textrm{J}_{2} \textrm{J}_{1} \textrm{J}_{m} \dots \textrm{J}_{4}\big) \big( F_3\big) = \big( \textrm{J}_{2} \textrm{J}_{1} \textrm{J}_{m} \dots \textrm{J}_{5}\big) \big( F_4\big) = \dots =  \textrm{J}_{2} \textrm{J}_{1} \big( F_m  \big) = \textrm{J}_{2} \big( F_1\big). \\
& F_1 = \big( \textrm{J}_{1} \textrm{J}_{m} \dots  \textrm{J}_{4}  \textrm{J}_{3} \big) \big( F_2\big) = \big( \textrm{J}_{1} \textrm{J}_{m} \dots  \textrm{J}_{4} \big) \big( F_3 \big) = \dots = \big( \textrm{J}_{1} \textrm{J}_{m} \big) \big( F_{m-1}\big) =  \textrm{J}_{1} \big( F_m\big).\label{992} 
\end{align}
\end{small}
\item\label{dakhel1} $\cap^{m}_{i =1} \Fix \textrm{J}_{i} \subseteq \cap^{m}_{i =1}  F_{i} $ and if $F_i = \varnothing$, then $\cap^{m}_{i =1} \Fix \textrm{J}_{i} =  \varnothing$. 
\item\label{ff2} For $ 1\leq i \leq m-1$, $\textrm{J}_{i+1} \big( F_i\big) = \big(F_{i+1}\big)$, and $\textrm{J}_{1} \big( F_{m}\big) = F_{1} $. Therefore, 
\begin{equation}
\mathbf{\textrm{J}_{A}} \mathbf{R} \big( F_1 \times F_2 \times \dots \times F_m\big) = F_1 \times F_2 \times \dots \times F_m.
\end{equation}
\item\label{yy1} $F_i \neq \varnothing$ if and only if $F_j \neq \varnothing$ if and only if $\mathbf{Z} = \varnothing$.
\item\label{yy2} $\mathbf{Z}$ is closed and convex, and $\mathbf{Z} \subseteq F_1 \times F_2 \times \dots \times F_m$.
\item\label{yy3} The mapping $Q_i{|}_{\mathbf{Z}} : \mathbf{Z} \to F_{i}$ is bijective and $Q_{i} \big( \mathbf{Z}\big) = F_i$.
\end{enumerate}		
\end{theorem}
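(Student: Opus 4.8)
The plan is to make part \ref{ff2} the engine of the whole proof and to derive everything else from it together with \cref{toon1} and \cref{equationfixed}. Write $T_i := \textrm{J}_{i}\textrm{J}_{i-1}\cdots\textrm{J}_{1}\textrm{J}_{m}\cdots\textrm{J}_{i+1}$, so that $F_i=\fix T_i$, and factor $T_i = U_i\circ\textrm{J}_{i+1}$ and $T_{i+1}=\textrm{J}_{i+1}\circ U_i$, where $U_i:=\textrm{J}_{i}\cdots\textrm{J}_{i+2}$ is the common inner block (indices read cyclically). The crux is an elementary conjugacy fact: if $x=U_i\textrm{J}_{i+1}x$ then $y:=\textrm{J}_{i+1}x$ satisfies $\textrm{J}_{i+1}U_iy=y$, which gives $\textrm{J}_{i+1}(F_i)\subseteq F_{i+1}$; conversely, if $\textrm{J}_{i+1}U_iy=y$ then $x:=U_iy$ satisfies $U_i\textrm{J}_{i+1}x=x$ and $\textrm{J}_{i+1}x=y$, which gives $F_{i+1}\subseteq\textrm{J}_{i+1}(F_i)$. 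Taking $i=m$ cyclically yields $\textrm{J}_1(F_m)=F_1$, so this proves \ref{ff2}; the product identity then follows by applying $\mathbf{J}_{\mathbf{A}}$ coordinatewise to $\mathbf{R}(F_1\times\cdots\times F_m)=F_m\times F_1\times\cdots\times F_{m-1}$.

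For \ref{ff1}, closedness and convexity come from the standard fact that the fixed point set of a nonexpansive self-map of a Hilbert space is closed and convex: each $T_i$ is a composition of firmly nonexpansive resolvents (Minty), hence nonexpansive. The chain of equalities is then obtained by iterating \ref{ff2}, e.g. $F_m=\textrm{J}_m(F_{m-1})=\textrm{J}_m\textrm{J}_{m-1}(F_{m-2})=\cdots=\textrm{J}_m\cdots\textrm{J}_2(F_1)$, and likewise for every row. For \ref{dakhel1}, if $x\in\bigcap_{i}\Fix\textrm{J}_i$ then $\textrm{J}_i x=x$ for all $i$, so $x$ is fixed by any composition of the $\textrm{J}_i$, in particular by each $T_i$, whence $x\in\bigcap_i F_i$; the second assertion is the contrapositive, since $\bigcap_i\Fix\textrm{J}_i\subseteq F_i=\varnothing$ as soon as one $F_i$ is empty.

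Parts \ref{yy1} and \ref{yy2} lean on the earlier results. From \ref{ff2}, $F_i\neq\varnothing$ forces $F_{i+1}=\textrm{J}_{i+1}(F_i)\neq\varnothing$, so nonemptiness of any one $F_i$ propagates cyclically to all of them; combined with \cref{toon1} (a cycle exists iff all $F_i\neq\varnothing$) and the identification $\mathbf{Z}=\fix(\mathbf{J}_{\mathbf{A}}\mathbf{R})$ as the set of cycles via \cref{equationfixed}, this yields the chain $F_i\neq\varnothing\Leftrightarrow F_j\neq\varnothing\Leftrightarrow\mathbf{Z}\neq\varnothing$. That $\mathbf{Z}$ is closed and convex again uses the nonexpansive-fixed-point fact: $\mathbf{R}$ is a coordinate permutation, hence an isometry, and $\mathbf{J}_{\mathbf{A}}$ is firmly nonexpansive on the product space, so $\mathbf{J}_{\mathbf{A}}\mathbf{R}$ is nonexpansive. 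The inclusion $\mathbf{Z}\subseteq F_1\times\cdots\times F_m$ reuses the computation in the proof of \cref{toon1}: each coordinate $z_i$ of a cycle lies in $F_i$.

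Finally, for \ref{yy3}, $Q_i$ maps $\mathbf{Z}$ into $F_i$ by \ref{yy2}. Surjectivity and the image identity $Q_i(\mathbf{Z})=F_i$ follow by the reconstruction used in \cref{toon1}: from any $z_i\in F_i$, set $z_{i+1}=\textrm{J}_{i+1}z_i,\ z_{i+2}=\textrm{J}_{i+2}z_{i+1},\dots$ cyclically to build a cycle $\mathbf{z}\in\mathbf{Z}$ with $Q_i(\mathbf{z})=z_i$, and the fixed-point property of $T_i$ guarantees that the recursion closes up consistently. Injectivity is immediate because that same recursion shows a cycle is completely determined by any single coordinate, so $z_i=z_i'$ forces $\mathbf{z}=\mathbf{z}'$. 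I expect the only genuinely delicate step to be the set-equality \ref{ff2}: the forward inclusion is routine, but the reverse inclusion $F_{i+1}\subseteq\textrm{J}_{i+1}(F_i)$ requires exhibiting a preimage, which is exactly where the conjugacy construction $x:=U_iy$ is needed; everything else is bookkeeping built on this relation plus the two cited results.
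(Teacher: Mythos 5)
Your proof is correct, but its engine is genuinely different from the paper's. The paper proves \ref{ff1} first, using only \emph{forward} inclusions: for $x\in F_m$ one applies $\textrm{J}_1$ to the fixed point equation and regroups to get $\textrm{J}_1(F_m)\subseteq F_1$, then $(\textrm{J}_2\textrm{J}_1)(F_m)\subseteq\textrm{J}_2(F_1)\subseteq F_2$, and so on; chaining these inclusions all the way around the cycle ends back at $F_m$, so the sandwich forces every inclusion to be an equality at once, and \ref{ff2} is then read off from \ref{ff1}. You reverse the logical order: you prove \ref{ff2} first, locally and in both directions, via the conjugacy between $\fix(U_i\textrm{J}_{i+1})$ and $\fix(\textrm{J}_{i+1}U_i)$, where the reverse inclusion $F_{i+1}\subseteq\textrm{J}_{i+1}(F_i)$ is obtained by exhibiting the explicit preimage $x=U_iy$; the chains in \ref{ff1} are then mere iteration. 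Your route buys locality (no need to traverse the whole cycle), an explicit inverse ($U_i|_{F_{i+1}}$ inverts $\textrm{J}_{i+1}|_{F_i}$, which also makes transparent the bijectivity statements that reappear later in \cref{tresdrd2}), and a reusable standard fact relating $\fix(AB)$ and $\fix(BA)$; the paper's route buys economy, since only one-sided inclusions are ever verified. The remaining parts you handle essentially as the paper does, except that you supply real arguments where the paper is terse: for \ref{yy1}, which the paper dismisses as ``clear'' (note that you also silently corrected the statement's typo --- $\mathbf{Z}=\varnothing$ should read $\mathbf{Z}\neq\varnothing$), and for surjectivity in \ref{yy3}, where your observation that the fixed-point property of $T_i$ makes the cyclic reconstruction close up consistently is exactly the detail the paper leaves implicit.
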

\begin{proof}
\ref{ff1}: Since each $\textrm{J}_{i}$ is firmly nonexpansive, so nonexpansive, then by \cite[Lemma~$2.1.12$~(ii)]{cegielski2012iterative}, the composition 
$$ \textrm{J}_{i} \dots \textrm{J}_{1} \textrm{J}_{m}  \dots \textrm{J}_{i+1}$$
is nonexpansive. Then by \cite[Proposition~$2.1.11$]{cegielski2012iterative} $F_i$ is closed and convex.\\
 Let $x \in F_m \Leftrightarrow x \in \Fix \big(  \textrm{J}_{m}  \textrm{J}_{m-1} \dots  \textrm{J}_{3} \textrm{J}_{2}  \textrm{J}_{1} \big) \Leftrightarrow x = \textrm{J}_{m}  \textrm{J}_{m-1} \dots  \textrm{J}_{3} \textrm{J}_{2}  \textrm{J}_{1} x $. Then, 
$$ \textrm{J}_{1} x =  \textrm{J}_{1} \big( \textrm{J}_{m}  \textrm{J}_{m-1} \dots  \textrm{J}_{3} \textrm{J}_{2}  \textrm{J}_{1} \big) x = \big( \textrm{J}_{1} \textrm{J}_{m}  \textrm{J}_{m-1} \dots  \textrm{J}_{3} \textrm{J}_{2} \big) \big( \textrm{J}_{1} x\big)$$
Therefore, $\textrm{J}_{1} x \in \Fix \big( \textrm{J}_{1} \textrm{J}_{m} \textrm{J}_{m-1} \dots \textrm{J}_{3} \textrm{J}_{2}\big) \Leftrightarrow \textrm{J}_{1} x \in F_1$. It follows that 
\begin{equation}
\textrm{J}_{1} \big( F_m\big) \subseteq F_1
\end{equation}
Moreover, 
\begin{align}
\big( \textrm{J}_{2} \textrm{J}_{1} \big) \big( F_m \big) = \big( \textrm{J}_{2} \textrm{J}_{1} \big) \big( \Fix \big( \textrm{J}_{m} \textrm{J}_{m-1} \dots \textrm{J}_{2} \textrm{J}_{1} \big)\big) &\subseteq  \textrm{J}_{2} \big( \Fix \big(  \textrm{J}_{1}  \textrm{J}_{m}  \textrm{J}_{m-1} \dots  \textrm{J}_{3}  \textrm{J}_{2} \big)\big)\\
& \subseteq  \Fix \big( \textrm{J}_{2} \textrm{J}_{1} \textrm{J}_{m} \textrm{J}_{m-1} \dots \textrm{J}_{4} \textrm{J}_{3}\big) = F_{2},
\end{align}	
hence
\begin{small}
\begin{align}
\big( \textrm{J}_{3} \textrm{J}_{2}\textrm{J}_{1} \big) \big( F_m \big) = \big( \textrm{J}_{3} \textrm{J}_{2}\textrm{J}_{1} \big) \big( \Fix \big( \textrm{J}_{m} \textrm{J}_{m-1} \dots \textrm{J}_{2} \textrm{J}_{1} \big) \big) &\subseteq \big( \textrm{J}_{3} \textrm{J}_{2} \big) \big( \Fix \big( \textrm{J}_{1} \textrm{J}_{m} \textrm{J}_{m-1} \dots \textrm{J}_{3} \textrm{J}_{2}\big)\big) \\
& \subseteq  \textrm{J}_{3} \big( \Fix \big( \textrm{J}_{2}\textrm{J}_{1} \textrm{J}_{m} \textrm{J}_{m-1} \dots \textrm{J}_{4} \textrm{J}_{3} \big)\big) \\
& \subseteq \Fix \big(  \textrm{J}_{3}  \textrm{J}_{2}  \textrm{J}_{1}  \textrm{J}_{m}  \textrm{J}_{m-1} \dots  \textrm{J}_{5}  \textrm{J}_{4}\big) = F_{3},	
\end{align}
\end{small}
until finally
\begin{small}
\begin{align}\label{12}
F_{m}  = \Fix \big( \textrm{J}_{m} \textrm{J}_{m-1} \dots \textrm{J}_{2}\textrm{J}_{1} & = \big( \textrm{J}_{m} \textrm{J}_{m-1} \dots \textrm{J}_{2}\textrm{J}_{1} \big) \big(\Fix \big( \textrm{J}_{m} \textrm{J}_{m-1} \dots \textrm{J}_{2}\textrm{J}_{1} \big)\big) \\
& \subseteq \big( \textrm{J}_{m} \textrm{J}_{m-1} \dots \textrm{J}_{2} \big) \big( \Fix \big(  \textrm{J}_{1} \textrm{J}_{m} \dots \textrm{J}_{3} \textrm{J}_{2} \big)\big) \\
& \ \ \vdots \\
& \subseteq \textrm{J}_{m} \big( \Fix \big(  \textrm{J}_{m-1}  \textrm{J}_{m-2} \dots  \textrm{J}_{2}  \textrm{J}_{1}  \textrm{J}_{m}\big)\big) =  \textrm{J}_{m} \big( F_{m-1}\big) \\
& \subseteq \Fix \big(  \textrm{J}_{m}  \textrm{J}_{m-1} \dots  \textrm{J}_{2}  \textrm{J}_{1}\big) = F_m.\label{hh}
\end{align}	
\end{small}
Hence, equality holds throughout \cref{12}-\cref{hh} and we are done. The same approach will verify \cref{991}-\cref{992}.\\
\ref{dakhel1}: It is well known that $\cap^{m}_{i =1} \Fix \textrm{J}_{i} \subseteq F_{1}, \ \ \cap^{m}_{i =1} \Fix \textrm{J}_{i} \subseteq F_{2}, \ \ \cdots, \ \ \cap^{m}_{i =1} \Fix \textrm{J}_{i} \subseteq F_{m}$. Hence $$\bigcap^{m}_{i =1} \Fix \textrm{J}_{i} \subseteq \bigcap^{m}_{i =1} F_{i}.$$
This also implies that $\cap^{m}_{i =1} \Fix \textrm{J}_{i} = \varnothing$ if $F_i = \varnothing$.
\ref{ff2}: From \ref{ff1}, we have 
\begin{equation}\label{fac1}
\textrm{J}_{1} \big( F_m\big) = F_1, \ \ \ \ \ \textrm{J}_{2} \big( F_1\big) = F_2, \ \ \ \ \ \dots, \ \ \ \ \ \textrm{J}_{m} \big( F_{m-1}\big) = F_m .
\end{equation}
Using \cref{res111}, \cref{yy7} and \cref{fac1}, we obtain
\begin{align*}
\mathbf{\textrm{J}_{A}} \mathbf{R} \big( F_1 \times F_2 \times \dots \times F_{m-1} \times F_m\big) &=   \mathbf{\textrm{J}_{A}} \big(  F_m \times F_1 \times F_2 \times \dots \times F_{m-1} \big)\\
& = \big(\textrm{J}_{1}, \textrm{J}_{2}, \cdots, \textrm{J}_{m} \big) \big(  F_m \times F_1 \times F_2 \times \dots \times F_{m-1} \big) \\
& = F_1 \times F_2 \times \dots \times F_{m-1} \times F_m.
\end{align*}  
\ref{yy1}: Clear from the definition of $F_i$, $F_j$ and $\mathbf{Z}$. \\
\ref{yy2}: Since $\mathbf{\textrm{J}_{A}} \mathbf{R}$ is nonexpansive and because $\mathbf{Z} = \Fix \mathbf{\textrm{J}_{A}} \mathbf{R} $, by \cite[Proposition~$2.1.11$]{cegielski2012iterative}, $\mathbf{Z}$ is closed and convex. Moreover, let $\mathbf{z}= \big( z_1, z_2, \dots, z_m \big) \in \Fix \mathbf{\textrm{J}_{A}} \mathbf{R}  \Leftrightarrow \mathbf{z} = \mathbf{\textrm{J}_{A}} \mathbf{R} \mathbf{z} $.  This implies 
\begin{align*}
& z_1 =  \textrm{J}_{1}  \textrm{J}_{m} \dots  \textrm{J}_{2} z_1, \\
& \ \ \ \ \ \ \ \ \ \vdots \\
& z_{i} =  \textrm{J}_{i} \textrm{J}_{i- 1} \dots \textrm{J}_{1}  \textrm{J}_{m}  \dots  \textrm{J}_{i+1} z_i,\\
& \ \ \ \ \ \ \ \ \ \vdots \\
& z_m =  \textrm{J}_{m}  \textrm{J}_{m-1} \dots  \textrm{J}_{1} z_m.
\end{align*}
Hence, $\mathbf{z} = \big( z_1, z_2, \dots, z_m \big) \in F_1 \times F_2 \times \dots \times F_{m-1} \times F_{m}$. Since this is true for all $ \mathbf{z} \in \mathbf{Z}$, therefore, $$\mathbf{Z} \subseteq F_1 \times F_2 \times \dots \times F_{m-1} \times F_m.$$  
\ref{yy3}: It is clear that \ref{ff1} that $Q_{i}: \mathbf{Z} \to F_i$ is surjective. To show $Q_i$ is injective, suppose $\mathbf{z} = \big( z_1, z_2, \dots, z_m\big)$, $\mathbf{\widetilde{z}} = \big(\widetilde{z}_1, \widetilde{z}_2, \dots, \widetilde{z}_m \big) \in \mathbf{Z}$ and $Q_{i} \big( \mathbf{z} \big) = Q_i \big( \mathbf{\widetilde{z}} \big) \Rightarrow z_i = \widetilde{z}_i $. Because $\mathbf{z}$ and $ \mathbf{\widetilde{z}}$ are cycles then we obtain
\begin{align}\label{ppw}
& z_{i+1} = \textrm{J}_{i+1} z_i = \textrm{J}_{i+1} \widetilde{z}_i= \widetilde{z}_{i+1} \\
& \ \ \ \ \ \ \ \vdots \\
& z_{m} = \textrm{J}_{m} z_{m-1} = \textrm{J}_{m} \widetilde{z}_{m-1}= \widetilde{z}_{m} \\
&  z_{1} = \textrm{J}_{1} z_{m} = \textrm{J}_{1} \widetilde{z}_{m}= \widetilde{z}_{1} \\
&  z_{2} = \textrm{J}_{2} z_{1} = \textrm{J}_{2} \widetilde{z}_{1}= \widetilde{z}_{2} \\
& \ \ \ \ \ \ \ \vdots \\ 
&  z_{i-1} = \textrm{J}_{i-1} z_{i-2} = \textrm{J}_{i-1} \widetilde{z}_{i-2}= \widetilde{z}_{i-1},\label{ttt} 
\end{align}
hence from \cref{ppw}-\cref{ttt}, we have 
$$ \mathbf{z} =  \mathbf{\widetilde{z}} . $$
\end{proof}

\begin{lemma}\label{ggggtsd}\normalfont Let $ \cap^{m}_{i = 1} \Fix \textrm{J}_{i} \neq \varnothing:= D$. Then the following are hold:
\begin{enumerate}
\item\label{yyyde1} We have $F_i = D$ for every $1 \leq i \leq m$.
\item\label{yyyde2} $\mathbf{Z} = \{ \big( z, z , \cdots, z\big) \mid z \in D \} = D^{m} \cap \mathbf{\Delta}$.	
\end{enumerate}
\end{lemma}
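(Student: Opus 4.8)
The plan is to prove \ref{yyyde1} first and then deduce \ref{yyyde2} from it together with the cycle structure. For \ref{yyyde1}, the inclusion $D \subseteq F_i$ is immediate from \cref{dakhel1}, since $D = \cap_{i=1}^m \Fix \textrm{J}_i \subseteq \cap_{i=1}^m F_i \subseteq F_i$ for each $i$. The substantive direction is $F_i \subseteq D$, and this is exactly where the \emph{firm} (not merely plain) nonexpansiveness of the resolvents must be used.

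Here is the key computation. Fix a common fixed point $c \in D$ and take $z \in F_m$; the remaining indices follow by relabelling the resolvents cyclically. Put $y_0 := z$ and $y_i := \textrm{J}_i y_{i-1}$ for $1 \le i \le m$, so that $y_m = \textrm{J}_m \cdots \textrm{J}_1 z = z = y_0$ because $z \in F_m$. Applying the firm nonexpansiveness inequality of each $\textrm{J}_i$ at the pair $(y_{i-1}, c)$ and using $\textrm{J}_i c = c$ (so that $(\Id - \textrm{J}_i)c = 0$) gives
\[
\|y_i - c\|^2 + \|y_{i-1} - y_i\|^2 \le \|y_{i-1} - c\|^2 .
\]
Summing over $i = 1, \dots, m$ and observing that $\sum_{i=1}^m \|y_i - c\|^2 = \sum_{i=1}^m \|y_{i-1} - c\|^2$ — a telescoping identity valid precisely because $y_m = y_0$ — the squared-norm terms cancel and leave
\[
\sum_{i=1}^m \|y_{i-1} - y_i\|^2 \le 0 .
\]
Hence $y_0 = y_1 = \dots = y_m$, so $\textrm{J}_i z = y_i = z$ for every $i$, i.e. $z \in D$. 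This yields $F_m \subseteq D$, and the identical argument establishes $F_i \subseteq D$ for all $i$, proving \ref{yyyde1}.

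For \ref{yyyde2}, I would start from \cref{yy2}, which gives $\mathbf{Z} \subseteq F_1 \times \cdots \times F_m$; by \ref{yyyde1} this product equals $D^m$. Thus any $\mathbf{z} = (z_1, \dots, z_m) \in \mathbf{Z}$ has every coordinate in $D$, so $z_i \in \Fix \textrm{J}_j$ for all $i,j$. Combining this with the cycle relations $z_i = \textrm{J}_i z_{i-1}$ forces $z_i = z_{i-1}$ for each $i$, so all coordinates coincide and $\mathbf{z} = (z, \dots, z)$ with $z \in D$. Conversely, for $z \in D$ the constant tuple satisfies $\mathbf{J}_{\mathbf{A}} \mathbf{R}(z, \dots, z) = (\textrm{J}_1 z, \dots, \textrm{J}_m z) = (z, \dots, z)$, so it lies in $\mathbf{Z} = \Fix(\mathbf{J}_{\mathbf{A}} \mathbf{R})$. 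This gives $\mathbf{Z} = \{(z, \dots, z) \mid z \in D\}$, and the stated equality with $D^m \cap \mathbf{\Delta}$ is simply the remark that a tuple lies in the diagonal $\mathbf{\Delta}$ and has all coordinates in $D$ exactly when it is a constant tuple with value in $D$.

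The main obstacle is the direction $F_i \subseteq D$ in \ref{yyyde1}: it fails for merely nonexpansive maps, and the proof genuinely relies on the \emph{transport} inequality afforded by firm nonexpansiveness together with the telescoping cancellation, which closes up only because the cyclic composition returns to its starting point ($y_m = y_0$). Once that step is in hand, part \ref{yyyde2} is a routine consequence of \cref{yy2} and the definition of a cycle.
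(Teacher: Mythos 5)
Your proof is correct, but it takes a more self-contained route than the paper on the substantive point. For part \ref{yyyde1}, the paper disposes of the equality $F_i = D$ in one line by citing \cite[Corollary~4.51]{BC2017} (the fixed-point set of a composition of firmly nonexpansive operators with a common fixed point equals the intersection of the individual fixed-point sets); your telescoping argument with the firm-nonexpansiveness inequality at the pair $(y_{i-1},c)$, closed up by $y_m = y_0$, is precisely the mechanism underlying that cited result, so you have in effect reproved the citation from first principles. What your version buys is transparency — it makes visible exactly where firm (rather than plain) nonexpansiveness enters, as you note — at the cost of length; the paper's version buys brevity at the cost of opacity. For part \ref{yyyde2}, your argument is essentially the paper's (coordinates of a cycle lie in $D$, hence are common fixed points, hence the cycle relations $z_{i} = \textrm{J}_{i}z_{i-1}$ collapse the tuple to a constant), but you additionally verify the reverse inclusion — that every constant tuple $(z,\dots,z)$ with $z \in D$ is fixed by $\mathbf{J}_{\mathbf{A}}\mathbf{R}$ — which the paper's proof omits, proving only $\mathbf{Z} \subseteq D^m \cap \mathbf{\Delta}$ even though the statement asserts equality. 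So your write-up is, if anything, more complete than the printed proof.
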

\begin{proof}
\ref{yyyde1}: $\textrm{J}_i$ is firmly nonexpansive for every $1 \leq i \leq m$, then by \cite[Corollary~$4.51$]{BC2017}, we have 
$$ \big( \forall (1\leq i \leq m)\big), \ \  \Fix \big( \textrm{J}_i \textrm{J}_{i-1} \cdots \textrm{J}_{1}\textrm{J}_{m} \cdots \textrm{J}_{i+1}  \big) = D. $$
\ref{yyyde2}: Let $\mathbf{z} = \big( z_1, z_2, \cdots, z_m\big) \in \mathbf{Z}$. Then, 
\begin{align*}
& z_1 = \textrm{J}_1 \textrm{J}_m \textrm{J}_{m-1} \cdots \textrm{J}_2 z_1 \Leftrightarrow z_1 \in F_{1} = D \\
& z_2 = \textrm{J}_2 \textrm{J}_1 \textrm{J}_m \cdots \textrm{J}_3 z_2 \Leftrightarrow z_2 \in F_2 = D\\
& \vdots \\
&  z_m = \textrm{J}_m \textrm{J}_{m-1} \textrm{J}_{m-2} \cdots \textrm{J}_1 z_m \Leftrightarrow z_m \in F_{m} = D, 
\end{align*} 
hence, $$\mathbf{z} = \big( z_1, z_2, \cdots, z_m\big) \in F_1 \times F_2 \times \cdots \times F_m = D \times U \times D \times \cdots \times D = D^{m}.$$
Moreover, 
 $$\mathbf{z} = \big( z_1, z_2, \cdots, z_m\big) = \big( z, z, \cdots, z\big) \in \mathbf{D}.$$
 Therefore, 
$$ \mathbf{z} \in D^{m} \cap \mathbf{D}. $$ 
\end{proof}

\begin{remark}\label{eeeesw1}\normalfont When $m = 2$, we have 
$  \textrm{J}_{1} \big( F_2\big) = F_1,$ and  $\textrm{J}_{2} \big( F_1\big) = F_2. $	
\end{remark}
\begin{proof}
 Let $z \in F_1$ and $\widetilde{z} \in F_2$, then we have $z = \textrm{J}_{1}  \textrm{J}_{2} z $ and $\textrm{J}_{2} z = \textrm{J}_{2} \textrm{J}_{1} (\textrm{J}_{2} z)$. Therefore,
\begin{equation}\label{tt1f1} 
 \textrm{J}_{2} \big( F_1\big) \subseteq F_{2}
\end{equation}
Moreover, $\widetilde{z} \in F_2$, then we have $\widetilde{z} = \textrm{J}_{2}  \textrm{J}_{1} \widetilde{z} $ and $\textrm{J}_{1} \widetilde{z} = \textrm{J}_{1} \textrm{J}_{2} (\textrm{J}_{1} \widetilde{z})$. 
Thus, 
\begin{equation}\label{tt1f2}
\textrm{J}_{1} \big( F_2\big) \subseteq F_{1}
\end{equation}
Now apply $\textrm{J}_{1}$ and $\textrm{J}_{2}$ to \cref{tt1f1} and \cref{tt1f2} respectively, we get 
$$  F_{1} \subseteq \textrm{J}_{1} \big( F_2\big) \ \ \ \text{and} \ \ \   F_{2} \subseteq \textrm{J}_{2} \big( F_1\big). $$ 
 Hence, we have $ F_{1} =  \textrm{J}_{1} \big( F_2\big)$ and $ F_{2} =  \textrm{J}_{2} \big( F_1\big)$.
\end{proof}

\begin{lemma}\label{wewewew22} \normalfont Recall from \cref{nnote11} that $\mathbf{Z} = \Fix  \mathbf{\textrm{J}_{A}} \mathbf{R}$. Then we have 
	$$ \mathbf{Z} = \Fix  \mathbf{\textrm{J}_{A}} \mathbf{R} = \Fix \mathbf{ \textrm{J}_{\frac{1}{2} \mathbf{A}}} \big( \frac{ \mathbf{Id}+ \mathbf{R}}{2}\big). $$	
\end{lemma}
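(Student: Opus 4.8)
The plan is to prove both displayed equalities by reducing every fixed-point condition to one and the same monotone inclusion. The first equality $\mathbf{Z} = \Fix \mathbf{\textrm{J}_{A}}\mathbf{R}$ is merely the definition \cref{nnote11}, so the real content is the second one, and the engine is the resolvent identity $\mathbf{w} = (\mathbf{Id}+C)^{-1}\mathbf{u} \Leftrightarrow \mathbf{u} \in \mathbf{w}+C\mathbf{w}$ already exploited in \cref{equationfixed}.

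First I would note that, since each $A_i$ is maximally monotone, so is the product $\mathbf{A}$, and so is $\tfrac{1}{2}\mathbf{A}$, because scaling a maximally monotone operator by a strictly positive constant preserves maximal monotonicity. Consequently the resolvent $\mathbf{\textrm{J}_{\frac{1}{2}\mathbf{A}}} = (\mathbf{Id}+\tfrac{1}{2}\mathbf{A})^{-1}$ is single-valued and everywhere defined, so the operator $\mathbf{\textrm{J}_{\frac{1}{2}\mathbf{A}}}\big(\tfrac{\mathbf{Id}+\mathbf{R}}{2}\big)$ and its fixed-point set are well defined and the equivalences below are clean.

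Next I would run the resolvent identity on each side. On the left, \cref{equationfixed} already yields
$$ \mathbf{z} \in \Fix \mathbf{\textrm{J}_{A}}\mathbf{R} \Leftrightarrow \mathbf{z} = \mathbf{\textrm{J}_{A}}(\mathbf{Rz}) \Leftrightarrow 0 \in \mathbf{A}(\mathbf{z}) + (\mathbf{Id}-\mathbf{R})(\mathbf{z}). $$
On the right, writing $\mathbf{u} = \tfrac{1}{2}(\mathbf{z}+\mathbf{Rz})$ and $C = \tfrac{1}{2}\mathbf{A}$, the same identity gives
$$ \mathbf{z} = \mathbf{\textrm{J}_{\frac{1}{2}\mathbf{A}}}\Big(\tfrac{\mathbf{z}+\mathbf{Rz}}{2}\Big) \Leftrightarrow \tfrac{\mathbf{z}+\mathbf{Rz}}{2} \in \mathbf{z} + \tfrac{1}{2}\mathbf{A}(\mathbf{z}) \Leftrightarrow \tfrac{1}{2}(\mathbf{Rz}-\mathbf{z}) \in \tfrac{1}{2}\mathbf{A}(\mathbf{z}). $$
Since $\tfrac{1}{2}$ is a strictly positive scalar multiplying both the vector on the left and the entire value set on the right, the last inclusion is equivalent to $\mathbf{Rz}-\mathbf{z} \in \mathbf{A}(\mathbf{z})$, that is, to $0 \in \mathbf{A}(\mathbf{z}) + (\mathbf{Id}-\mathbf{R})(\mathbf{z})$. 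Hence both fixed-point sets are exactly the solution set of this single inclusion, so they coincide with $\mathbf{Z}$, which establishes the claim.

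There is no genuine obstacle here; the argument is a direct computation. The only points demanding care are bookkeeping ones: carrying the factor $\tfrac{1}{2}$ consistently through the resolvent, and justifying the cancellation of $\tfrac{1}{2}$ in the final set inclusion by the positivity of the scalar, so that $\tfrac{1}{2}\mathbf{A}(\mathbf{z})$ is read as the Minkowski scaling of the value set. Pinning down at the outset that $\tfrac{1}{2}\mathbf{A}$ is maximally monotone, hence that its resolvent is single-valued, is precisely what licenses these equivalences.
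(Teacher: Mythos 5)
Your proposal is correct and follows essentially the same route as the paper: both arguments translate the fixed-point equations into monotone inclusions via the resolvent identity and exploit the harmless positive factor $\tfrac{1}{2}$, the only cosmetic difference being that you reduce each of the two fixed-point conditions separately to the common inclusion $0 \in \mathbf{A}(\mathbf{z}) + (\mathbf{Id}-\mathbf{R})(\mathbf{z})$, whereas the paper runs a single chain of equivalences from $\mathbf{x}=\mathbf{\textrm{J}_{A}}\mathbf{R}\mathbf{x}$ to $\mathbf{x}=\mathbf{\textrm{J}_{\frac{1}{2}\mathbf{A}}}\big(\tfrac{\mathbf{Id}+\mathbf{R}}{2}\big)\mathbf{x}$ by halving and adding/subtracting $\tfrac{\mathbf{x}}{2}$. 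Your explicit remarks on the maximal monotonicity of $\tfrac{1}{2}\mathbf{A}$ and the Minkowski-scaling justification for cancelling the $\tfrac{1}{2}$ are sound and, if anything, slightly more careful than the paper's presentation.
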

\begin{proof}
	Let $\mathbf{x} \in \Fix \big( \textrm{J}_{A} \mathbf{R} \big)$, then
	\begin{align*}
	\mathbf{x} = \textrm{J}_{A} \mathbf{R} \mathbf{x} \Leftrightarrow \mathbf{R} \mathbf{x} \in \mathbf{x} + \mathbf{A} \mathbf{x} & \Leftrightarrow  0 \in \big( \mathbf{x} - \mathbf{R} \mathbf{x}\big) + \mathbf{A} ( \mathbf{x} )\\
	&  \Leftrightarrow 0 \in \frac{\big( \mathbf{x} - \mathbf{R} \mathbf{x}\big)}{2} + \frac{\mathbf{A} ( \mathbf{x} )}{2} \\
	&  \Leftrightarrow 0 \in \mathbf{ x} - \Big( \frac{ \mathbf{Id} + \mathbf{R} }{2}\Big) \mathbf{x} +  \frac{\mathbf{A} ( \mathbf{x} )}{2}  \ \ \ \textrm{adding and subtracting $\frac{\mathbf{x}}{2}$} \\
	&   \Leftrightarrow \Big( \frac{ \mathbf{Id} + \mathbf{R} }{2}\Big) \mathbf{x} \in \Big( \mathbf{Id} + \frac{1}{2} \mathbf{A}\Big) \big( \mathbf{x}\big) \\
	&  \Leftrightarrow  \mathbf{x} = \mathbf{ \textrm{J}_{\frac{1}{2} \mathbf{A}}} \Big( \frac{ \mathbf{Id}+ \mathbf{R}}{2}\Big) \big( \mathbf{x}\big) \\
	&  \Leftrightarrow \mathbf{x} \in \Fix \mathbf{ \textrm{J}_{\frac{1}{2} \mathbf{A}}} \Big( \frac{ \mathbf{Id}+ \mathbf{R}}{2}\Big).
	\end{align*}
\end{proof}

\begin{lemma}\label{ggdwe2}\normalfont Suppose that $\Fix  \textrm{J}_i \neq \varnothing$ for each $1 \leq i \leq m$. Then, the following are equivalent 
\begin{enumerate} 
\item\label{iiigf1} $\cap^{m}_{i=1} \Fix  \textrm{J}_i \neq \varnothing$.
\item\label{iiigf22} $F_1 = F_2 = \cdots = F_m \neq \varnothing$. 	
\end{enumerate}
\end{lemma}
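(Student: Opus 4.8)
The plan is to prove the two implications separately; the forward one is immediate and the reverse one carries all the difficulty.

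For \ref{iiigf1}$\Rightarrow$\ref{iiigf22} I would simply invoke \cref{ggggtsd}. Setting $D:=\cap_{i=1}^m \Fix \textrm J_i\neq\varnothing$, \cref{ggggtsd}\,\ref{yyyde1} yields $F_i=D$ for every $1\le i\le m$, whence $F_1=\cdots=F_m=D\neq\varnothing$, which is exactly \ref{iiigf22}. The reverse implication is where I would do the work. Write $F:=F_1=\cdots=F_m\neq\varnothing$, pick $z_1\in F_1$, and let $\mathbf z=(z_1,\dots,z_m)$ be the cycle it generates (\cref{cydef}), so $z_i\in F_i=F$ for all $i$. Put the gap vectors $g_i:=z_i-z_{i+1}$ (indices mod $m$, $z_{m+1}:=z_1$); then $\sum_{i=1}^m g_i=0$ and, by the resolvent identity $\textrm J_{i+1}=(\Id+A_{i+1})^{-1}$ of \cref{quli1}, $g_i\in A_{i+1}(z_{i+1})$. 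The goal is to show every $g_i=0$: then $z_1=\cdots=z_m=:z$ satisfies $z=\textrm J_i z$ for each $i$, so $z\in\cap_{i=1}^m\Fix\textrm J_i\neq\varnothing$.

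First I would establish a gap-equality principle: any two cycles share the same gap vectors. Given cycles $(z_i),(z_i')\in\mathbf Z$, firm nonexpansiveness of each $\textrm J_{i+1}$ gives $\norm{z_{i+1}-z_{i+1}'}^2+\norm{g_i-g_i'}^2\le\norm{z_i-z_i'}^2$; summing cyclically over $i$ the outer terms cancel, forcing $\sum_i\norm{g_i-g_i'}^2\le 0$, i.e.\ $g_i=g_i'$. Since $Q_i|_{\mathbf Z}$ maps onto $F_i$ (\cref{hygtfre1}\,\ref{yy3}), every $y\in F=F_i$ is the $i$-th entry of a cycle, so this principle yields $\textrm J_{i+1}y=y-g_i$ for all $y\in F$; that is, $\textrm J_{i+1}$ acts on $F$ as translation by $-g_i$. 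Combined with $\textrm J_{i+1}(F_i)=F_{i+1}$ (\cref{hygtfre1}\,\ref{ff2}) and $F_i=F_{i+1}=F$, this gives $F-g_i=F$, so $F$ is invariant under translation by $g_i$; as $F$ is closed and convex (\cref{hygtfre1}\,\ref{ff1}), both $\pm g_i\in\rec F$.

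The finishing step uses the standing hypothesis $\Fix\textrm J_{i+1}\neq\varnothing$. Because $\pm g_i\in\rec F$ and $z_{i+1}\in F$, the whole line $\{z_{i+1}+s g_i:s\in\RR\}$ lies in $F=F_i$; applying the translation formula $\textrm J_{i+1}y=y-g_i$ and the resolvent identity to $y=z_{i+1}+(s+1)g_i$ shows $g_i\in A_{i+1}(z_{i+1}+s g_i)$ for every $s\in\RR$. Choosing $w\in\Fix\textrm J_{i+1}=\zer A_{i+1}$, so $(w,0)\in\gra A_{i+1}$, monotonicity of $A_{i+1}$ gives $\scal{z_{i+1}+sg_i-w}{g_i}=\scal{z_{i+1}-w}{g_i}+s\norm{g_i}^2\ge 0$ for all $s\in\RR$; letting $s\to-\infty$ forces $\norm{g_i}^2=0$, hence $g_i=0$. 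Since this holds for each $i$, the cycle degenerates and \ref{iiigf1} follows.

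The main obstacle is exactly this last passage, forcing the gaps to vanish. Plain monotonicity gives only $\scal{z_i-w}{g_i}\ge 0$, whose cyclic sum is uninformative because $\sum_i g_i=0$; what makes the argument close is upgrading each gap to a \emph{two-sided} recession direction of $F$ (via gap equality and the invariance $F-g_i=F$), so the monotonicity inequality can be read along an entire line and a zero of $A_{i+1}$ annihilates the gap. This is also where the hypothesis $\Fix\textrm J_i\neq\varnothing$ is indispensable: for two translations on $\RR$ by $a$ and $-a$ with $a\neq 0$ one has $F_1=F_2=\RR\neq\varnothing$ yet $\cap_i\Fix\textrm J_i=\varnothing$ and a constant nonzero gap, so without $\zer A_i\neq\varnothing$ the degeneracy step genuinely fails.
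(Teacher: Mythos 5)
Your proof is correct, but only its first half coincides with the paper's; the second half takes a genuinely different route and in fact does more than the paper does. For \ref{iiigf1}$\Rightarrow$\ref{iiigf22} you and the paper argue identically, via \cref{ggggtsd}~\ref{yyyde1}. For \ref{iiigf22}$\Rightarrow$\ref{iiigf1}, however, the paper's entire argument is to invoke \cref{hygtfre1}~\ref{yy1} and conclude $\mathbf{Z}\neq\varnothing$; it never passes from the existence of a cycle to $\cap_{i=1}^{m}\Fix\textrm{J}_i\neq\varnothing$, so as written it establishes strictly less than item \ref{iiigf1}, and the gap is not cosmetic: a nonempty $\mathbf{Z}$ alone does not force a common fixed point, exactly as your two-translations example on $\RR$ shows. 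Your argument supplies the missing content. The cyclic summation of the firm-nonexpansiveness inequalities shows that all cycles share the same gap vectors $g_i$; combined with the surjectivity of $Q_i|_{\mathbf{Z}}$ from \cref{hygtfre1}~\ref{yy3}, each $\textrm{J}_{i+1}$ therefore acts on the common set $F$ as translation by $-g_i$; the resulting equality $F-g_i=F$, with $F$ closed and convex by \cref{hygtfre1}~\ref{ff1}, makes each $g_i$ a two-sided recession direction; and monotonicity of $A_{i+1}$ tested against a zero $w\in\Fix\textrm{J}_{i+1}=\zer A_{i+1}$ along the line $\big\{z_{i+1}+s g_i \mid s\in\RR\big\}$ forces $g_i=0$. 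This is also the only place the standing hypothesis $\Fix\textrm{J}_i\neq\varnothing$ is used, a point the paper never isolates. Incidentally, your translation formula $\textrm{J}_{i+1}y=y-g_i$ on $F_i$ is the same statement as the paper's later \cref{tresdrd2}~\ref{yyy1} (with $y_{i+1}=g_i$), but you obtain it self-containedly from firm nonexpansiveness, whereas the paper derives it downstream of the Attouch--Th\'era machinery of \cref{dssesrs}; your route has the advantage of being available at this point of the paper without forward references.
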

\begin{proof}
\ref{iiigf1}: Let $\cap^{m}_{i=1} \Fix  \textrm{J}_i \neq \varnothing \Rightarrow F_1 \neq \varnothing, F_2 \neq \varnothing, \cdots, F_m \neq \varnothing$ and from \cref{ggggtsd}\ref{yyyde1}, we have  $$F_1 = F_2 = \cdots = F_m = \cap^{m}_{i=1} \Fix  \textrm{J}_i.$$
\ref{iiigf22}: Let $F_1 = F_2 = \cdots = F_m \neq \varnothing$. Then, by \cref{hygtfre1}~\ref{yy1} we have $\mathbf{Z} \neq \varnothing$ 
\end{proof}

\section{Consequences of Attouch-Th\'era  duality}\label{dssesrs}
Recall \cref{rea1} that 
$$  \mathbf{A} = A_1 \times A_2 \times \dots \times A_{m}.$$
From now on, suppose that 
\begin{empheq}[box=\mybluebox]{equation}
\text{$\mathbf{A}$ is
maximally monotone on $\mathbf{X}$,}
 \end{empheq}
and 
\begin{empheq}[box=\mybluebox]{equation}
\text{$\mathbf{C}:= \zer \mathbf{A}$ is
not empty. }
 \end{empheq}
Recall \cref{nnote11} that 
$$ \mathbf{Z}:= \fix (\mathbf{J}_{\mathbf{A}} \mathbf{R}).  $$

\begin{proposition} \normalfont The following holds:
\begin{enumerate}
\item\label{ttq1} $\mathbf{Z} = \text{psol} \big( \mathbf{Id} - \mathbf{R} \big)$.
\item\label{ttq2} $\mathbf{A} + \mathbf{Id} - \mathbf{R} $ is maximally monotone.
\item\label{ttq3} $\mathbf{Z}$ is closed and convex.
\end{enumerate}
\end{proposition}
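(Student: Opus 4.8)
The plan is to dispatch the three assertions in order, the first and third being essentially bookkeeping while the second carries the real content.

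For \ref{ttq1} I would simply unwind the definitions, reading the proposed identity as $\mathbf{Z} = \text{psol}(\mathbf{A}, \mathbf{Id}-\mathbf{R})$. By \cref{equationfixed}, a point $\mathbf{z} \in \mathbf{X}$ lies in $\mathbf{Z} = \fix(\mathbf{J}_{\mathbf{A}}\mathbf{R})$ if and only if $0 \in \mathbf{A}(\mathbf{z}) + (\mathbf{Id}-\mathbf{R})(\mathbf{z})$. Comparing with the definition \cref{SumSol} of psol applied to the pair $(\mathbf{A}, \mathbf{Id}-\mathbf{R})$, this says precisely that $\mathbf{z} \in \text{psol}(\mathbf{A}, \mathbf{Id}-\mathbf{R}) = \zer(\mathbf{A} + \mathbf{Id} - \mathbf{R})$, which is the claimed equality.

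The crux is \ref{ttq2}. First I would record that $\mathbf{Id}-\mathbf{R}$ is by itself maximally monotone. Since $\mathbf{R}$ merely permutes the coordinate blocks of $\mathbf{X} = X^m$, it is a linear isometry, so $\scal{\mathbf{R}\mathbf{x}}{\mathbf{x}} \leq \norm{\mathbf{R}\mathbf{x}}\,\norm{\mathbf{x}} = \norm{\mathbf{x}}^2$ by Cauchy--Schwarz; hence $\scal{(\mathbf{Id}-\mathbf{R})\mathbf{x}}{\mathbf{x}} = \norm{\mathbf{x}}^2 - \scal{\mathbf{R}\mathbf{x}}{\mathbf{x}} \geq 0$, so the bounded linear operator $\mathbf{Id}-\mathbf{R}$ is monotone. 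Being single-valued, continuous, and everywhere defined on $\mathbf{X}$, it is maximally monotone. I would then invoke the sum theorem for maximally monotone operators: because $\dom(\mathbf{Id}-\mathbf{R}) = \mathbf{X}$, we have $\dom\mathbf{A} \cap \inte\dom(\mathbf{Id}-\mathbf{R}) = \dom\mathbf{A} \neq \varnothing$, the domain being nonempty since $\mathbf{C} = \zer\mathbf{A} \neq \varnothing$, so the constraint qualification is satisfied and $\mathbf{A} + (\mathbf{Id}-\mathbf{R})$ is maximally monotone \cite{BC2017}. The main obstacle is exactly verifying the hypotheses of this sum theorem in the (possibly infinite-dimensional) Hilbert setting; what makes this painless is that the perturbation $\mathbf{Id}-\mathbf{R}$ is a globally defined continuous operator, which renders the constraint qualification automatic.

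Finally, \ref{ttq3} follows at once: by \ref{ttq1} and \ref{ttq2}, $\mathbf{Z} = \zer(\mathbf{A} + \mathbf{Id} - \mathbf{R})$ is the zero set of a maximally monotone operator, and such sets are always closed and convex \cite{BC2017}. I would also note for consistency that this recovers \cref{hygtfre1}~\ref{yy2}, where $\mathbf{Z}$ was already shown to be closed and convex as the fixed point set of the nonexpansive operator $\mathbf{J}_{\mathbf{A}}\mathbf{R}$, so the two arguments agree.
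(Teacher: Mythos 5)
Your proposal is correct, and its skeleton matches the paper's for the first two items: \ref{ttq1} is the same unwinding of \cref{equationfixed} against the definition \cref{SumSol}, and \ref{ttq2} follows the same route of establishing that $\mathbf{Id}-\mathbf{R}$ is a full-domain maximally monotone operator and then invoking the sum theorem. The differences lie in what you make self-contained and in how you derive \ref{ttq3}. For \ref{ttq2}, the paper simply cites \cite[Theorem~7.1]{alwadani2021thesis} for the fact that $\mathbf{Id}-\mathbf{R}$ is linear, full-domain and maximally monotone, and then \cite[Corollary~25.5~(i)]{BC2017} for the sum; you instead prove the key ingredient from scratch: $\mathbf{R}$ is a linear isometry, Cauchy--Schwarz gives $\scal{(\mathbf{Id}-\mathbf{R})\mathbf{x}}{\mathbf{x}}\geq 0$, and a monotone, continuous, everywhere-defined operator is maximally monotone. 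This buys an elementary argument that does not rest on the author's thesis, at the cost of a few extra lines; your verification of the constraint qualification (full domain of the perturbation, $\dom\mathbf{A}\neq\varnothing$) is also fine, and indeed automatic for any maximally monotone $\mathbf{A}$. For \ref{ttq3}, the paper combines \ref{ttq1} with \cref{hygtfre1}~\ref{yy2}, i.e.\ it reuses the earlier observation that $\mathbf{Z}$ is the fixed point set of the nonexpansive map $\mathbf{J}_{\mathbf{A}}\mathbf{R}$, whereas you combine \ref{ttq1} with \ref{ttq2} and the fact that the zero set of a maximally monotone operator is closed and convex \cite{BC2017}. Both are valid; your version makes \ref{ttq3} a genuine corollary of \ref{ttq2} rather than of the earlier fixed-point material, and you correctly note that the two arguments agree.
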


\begin{proof}
\ref{ttq1}: Combine \cref{equationfixed} and \cref{SumSol}.
\ref{ttq2}: Note that $\mathbf{Id} - \mathbf{R}$ is linear, full domain, and maximally monotone by \cite[Theorem 7.1]{alwadani2021thesis}. Moreover, $\mathbf{A}$ is maximally monotone by assumption. Therefore, the sum is maximally monotone by \cite[Corollary 25.5~(i)]{BC2017}
\ref{ttq3}: It follows directly from \cref{ttq1} and \cref{hygtfre1}~\ref{yy2}. 
\end{proof}   

\begin{theorem}\normalfont Recall from \cref{equationfixed}, the primal (Attouch-Th\'era) problem,
$$ 0 \in \mathbf{A} (\mathbf{z}) + ( \mathbf{Id} - \mathbf{R}) (\mathbf{z}),$$
for the pair $ ( \mathbf{A}, \mathbf{Id} - \mathbf{R} )$. Then the Attouch-Th\'era dual problem is 
\begin{empheq}[box=\mybluebox]{equation}\label{fgfds98d}
\text{$ 0 \in \mathbf{A}^{-1} ( \mathbf{y}) + ( \mathbf{Id} - \mathbf{R})^{-1} ( \mathbf{y})$ }   
 \end{empheq}
or, 
\begin{empheq}[box=\mybluebox]{equation}\label{trewq}
\text{$ 0 \in \big( \mathbf{A}^{-1}  + N_{ \mathbf{D}^\perp } \big)( \mathbf{y} ) + \Big( \frac{1}{2} \mathbf{Id} + \mathbf{T} \Big)( \mathbf{y} )$ }
 \end{empheq}
 Moreover, 
 \begin{equation}\label{fdsxwa}
 \text{dsol} ( \mathbf{A}, \mathbf{Id} - \mathbf{R} ) = \zer \Big(  \mathbf{A}^{-1}  + N_{ \mathbf{D}^\perp} +   \frac{1}{2} \mathbf{Id} + \mathbf{T} \Big).
\end{equation}
\end{theorem}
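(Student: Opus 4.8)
The plan is to read off the dual directly from the Attouch--Th\'era definition and then simplify the displacement term, exploiting that $\mathbf{Id}-\mathbf{R}$ is linear.

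First I would specialize the dual-pair construction to $A=\mathbf{A}$ and $B=\mathbf{Id}-\mathbf{R}$. By the definition of the dual pair, $(\mathbf{A},\mathbf{Id}-\mathbf{R})^*=(\mathbf{A}^{-1},(\mathbf{Id}-\mathbf{R})^{-\ovee})$, so by \eqref{Sum2} the dual problem is $0\in\mathbf{A}^{-1}(\mathbf{y})+(\mathbf{Id}-\mathbf{R})^{-\ovee}(\mathbf{y})$. The one nontrivial simplification is that $\mathbf{Id}-\mathbf{R}$ is linear and therefore odd, whence $(\mathbf{Id}-\mathbf{R})^{\ovee}=(-\Id)\circ(\mathbf{Id}-\mathbf{R})\circ(-\Id)=\mathbf{Id}-\mathbf{R}$; taking inverses gives $(\mathbf{Id}-\mathbf{R})^{-\ovee}=((\mathbf{Id}-\mathbf{R})^{\ovee})^{-1}=(\mathbf{Id}-\mathbf{R})^{-1}$. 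Substituting this yields the first form \eqref{fgfds98d}.

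To pass to \eqref{trewq} I would insert the set-valued decomposition of the inverse displacement operator. Because $\mathbf{R}$ is a coordinate permutation it is orthogonal with $\mathbf{R}^m=\mathbf{Id}$, so $\ker(\mathbf{Id}-\mathbf{R})=\mathbf{D}$ is the diagonal and $\ran(\mathbf{Id}-\mathbf{R})=\mathbf{D}^\perp$ (closed, since $\mathbf{R}$ has finite spectrum). Hence $(\mathbf{Id}-\mathbf{R})^{-1}$ has domain $\mathbf{D}^\perp$ and sends each admissible $\mathbf{y}$ to a coset of $\mathbf{D}$. The term $N_{\mathbf{D}^\perp}$ captures precisely this bookkeeping: its domain is $\mathbf{D}^\perp$ and $N_{\mathbf{D}^\perp}(\mathbf{y})=(\mathbf{D}^\perp)^\perp=\mathbf{D}$ there, so it restricts the domain and restores the kernel direction simultaneously, while the single-valued part $\tfrac12\mathbf{Id}+\mathbf{T}$ supplies a particular solution. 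The shape $\tfrac12\mathbf{Id}+\mathbf{T}$ can be motivated by diagonalizing $\mathbf{R}$ (after complexification): on the eigenspace for an eigenvalue $\omega^k$, with $\omega$ a primitive $m$-th root of unity and $k\in\{1,\dots,m-1\}$, the inverse acts by the scalar $(1-\omega^k)^{-1}$, whose real part equals $\tfrac12$ for every such $k$. Thus the symmetric part of $(\mathbf{Id}-\mathbf{R})^{-1}$ restricted to $\mathbf{D}^\perp$ is $\tfrac12\mathbf{Id}$ and $\mathbf{T}$ is its skew-symmetric part, giving the operator identity $(\mathbf{Id}-\mathbf{R})^{-1}=N_{\mathbf{D}^\perp}+\tfrac12\mathbf{Id}+\mathbf{T}$ recorded in \cite[Theorem~7.1]{alwadani2021thesis}. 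Feeding this into \eqref{fgfds98d} produces \eqref{trewq}.

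Finally, \eqref{fdsxwa} is the solution-set restatement: by \eqref{SumSol2}, $\text{dsol}(\mathbf{A},\mathbf{Id}-\mathbf{R})$ is the zero set of $\mathbf{A}^{-1}+(\mathbf{Id}-\mathbf{R})^{-1}$, and replacing the second summand by its decomposition gives $\zer(\mathbf{A}^{-1}+N_{\mathbf{D}^\perp}+\tfrac12\mathbf{Id}+\mathbf{T})$. I expect the genuine work to sit in the middle step, namely establishing and correctly reading the set-valued identity for $(\mathbf{Id}-\mathbf{R})^{-1}$, in particular verifying that $N_{\mathbf{D}^\perp}+\tfrac12\mathbf{Id}+\mathbf{T}$ has the same graph as the inverse, with $N_{\mathbf{D}^\perp}$ doing the double duty of domain restriction and kernel recovery. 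The first and last steps are then formal consequences of the definitions.
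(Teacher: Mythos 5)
Your proposal is correct and follows essentially the same route as the paper: you read the dual problem off the Attouch--Th\'era dual pair, use linearity of $\mathbf{R}$ to get $(\mathbf{Id}-\mathbf{R})^{-\ovee}=(\mathbf{Id}-\mathbf{R})^{-1}$, substitute the decomposition $(\mathbf{Id}-\mathbf{R})^{-1}=N_{\mathbf{D}^\perp}+\tfrac{1}{2}\mathbf{Id}+\mathbf{T}$, and then restate the solution set via the definition of dsol, exactly as the paper does (it imports the decomposition from \cite[Theorem~2.8(i)]{alwadani2024additiona}). The only difference is cosmetic: you additionally sketch a correct spectral justification of that decomposition (real part $\tfrac{1}{2}$ of $(1-\omega^k)^{-1}$, kernel and range bookkeeping via $N_{\mathbf{D}^\perp}$), whereas the paper simply cites it.
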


\begin{proof}
The dual pair of $\big(\mathbf{A}, \big(  \mathbf{Id} - \mathbf{R}\big)\big)$ is $$\big(\mathbf{A}, \big(  \mathbf{Id} - \mathbf{R}\big)\big)^{*} = \big(  \mathbf{A}^{-1}, \big( \mathbf{Id} - \mathbf{R}\big)^{- \ovee} \big).$$
Because of the linearity of $\mathbf{R}$, we have
$$ \big( \mathbf{Id} - \mathbf{R}\big)^{- \ovee} = \big( - \mathbf{Id}\big) \circ \big( \mathbf{Id} - \mathbf{R}\big)^{-1}\circ \big( \mathbf{- Id} \big) = \big( \mathbf{Id} - \mathbf{R}\big)^{-1}. $$
Hence, Attouch-Th\'era dual problem simplifies to 
\begin{equation*}\label{yyydgve2}
0 \in  \mathbf{A}^{-1} \big( \mathbf{y}\big) + \big( \mathbf{Id} - \mathbf{R}\big)^{-1} \big( \mathbf{y}\big).
\end{equation*}
From \cite[Theorem 2.8~(i)]{alwadani2024additiona}, we have 
\begin{align*}
0 \in  \mathbf{A}^{-1} \big( \mathbf{y}\big) + \big( \mathbf{Id} - \mathbf{R}\big)^{-1} \big( \mathbf{y}\big)  & \Leftrightarrow 0 \in \mathbf{A}^{-1} \big( \mathbf{y}\big) + \Big(  \mathbf{N}_{\mathbf{D}^{\perp}}  + \frac{1}{2} \mathbf{Id} + \mathbf{T} \Big) \big( \mathbf{y}\big) \\
& \Leftrightarrow 0 \in \big( \mathbf{A}^{-1} +   \mathbf{N}_{\mathbf{D}^{\perp}}   \big)   (\mathbf{y})+ \Big( \frac{1}{2} \mathbf{Id} + \mathbf{T} \Big)  (\mathbf{y}), 
\end{align*}
which verifies \cref{trewq}. Next, by \cref{SumSol2}, \cref{fgfds98d}, and \cref{trewq}, we have 
\begin{align*}
 \text{dsol} ( \mathbf{A}, \mathbf{Id} - \mathbf{R} ) & = \zer \Big(   \mathbf{A}^{-1} +  \big( \mathbf{Id} - \mathbf{R} \big)^{-1}\Big) \\
 & =  \zer \Big( \mathbf{A}^{-1}  +  \mathbf{N}_{\mathbf{D}^{\perp}}  + \frac{1}{2} \mathbf{Id} + \mathbf{T} \Big).
\end{align*}
\end{proof}

\begin{proposition}\label{hgsttttq}\normalfont The solution set of \cref{fgfds98d} is at most a at most a sigleton and possibly empty.
\end{proposition}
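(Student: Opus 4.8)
The plan is to recognise the solution set of \cref{fgfds98d} as the zero set of a single operator and then to show that this operator is \emph{strongly} monotone, which forces uniqueness. By the definition of dual solutions in \cref{SumSol2} together with the identity \cref{fdsxwa}, the solution set of \cref{fgfds98d} is precisely
\[
\text{dsol}(\mathbf{A}, \mathbf{Id} - \mathbf{R}) = \zer \mathbf{M}, \qquad \mathbf{M} := \mathbf{A}^{-1} + \mathbf{N}_{\mathbf{D}^{\perp}} + \tfrac{1}{2}\mathbf{Id} + \mathbf{T}.
\]
It therefore suffices to prove that $\mathbf{M}$ has at most one zero, and for this I would establish strong monotonicity of $\mathbf{M}$ with constant $\tfrac{1}{2}$.

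The key step is the following term-by-term estimate. Take $(\mathbf{y}_1, \mathbf{w}_1), (\mathbf{y}_2, \mathbf{w}_2) \in \gra \mathbf{M}$ and write $\mathbf{w}_k = \mathbf{a}_k + \mathbf{n}_k + \tfrac{1}{2}\mathbf{y}_k + \mathbf{T}\mathbf{y}_k$ with $\mathbf{a}_k \in \mathbf{A}^{-1}\mathbf{y}_k$ and $\mathbf{n}_k \in \mathbf{N}_{\mathbf{D}^{\perp}}\mathbf{y}_k$. Expanding $\scal{\mathbf{y}_1 - \mathbf{y}_2}{\mathbf{w}_1 - \mathbf{w}_2}$ and using linearity of $\mathbf{T}$, I would bound each of the four resulting pieces: the $\mathbf{A}^{-1}$-piece is $\geq 0$ because the inverse of a maximally monotone operator is monotone; the $\mathbf{N}_{\mathbf{D}^{\perp}}$-piece is $\geq 0$ because normal-cone operators of convex sets are monotone; the $\mathbf{T}$-piece vanishes since $\mathbf{T}$ is linear and skew-adjoint (from the decomposition in \cite[Theorem 2.8]{alwadani2024additiona}), so $\scal{\mathbf{v}}{\mathbf{T}\mathbf{v}} = 0$ for every $\mathbf{v}$; and the remaining piece equals $\tfrac{1}{2}\normsq{\mathbf{y}_1 - \mathbf{y}_2}$. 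Combining these gives
\[
\scal{\mathbf{y}_1 - \mathbf{y}_2}{\mathbf{w}_1 - \mathbf{w}_2} \geq \tfrac{1}{2}\normsq{\mathbf{y}_1 - \mathbf{y}_2}.
\]

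Strong monotonicity then yields the conclusion at once: if $\mathbf{y}_1, \mathbf{y}_2 \in \zer \mathbf{M}$, choosing $\mathbf{w}_1 = \mathbf{w}_2 = 0$ above forces $0 \geq \tfrac{1}{2}\normsq{\mathbf{y}_1 - \mathbf{y}_2}$, hence $\mathbf{y}_1 = \mathbf{y}_2$, so $\zer \mathbf{M}$ — and with it the solution set of \cref{fgfds98d} — contains at most one point. Since no existence claim is being made, the set may also be empty, which accounts for the stated possibility. I expect the only delicate point to be the verification that the three non-identity summands are monotone with the skew part contributing exactly zero; but this is already built into the additive decomposition of $(\mathbf{Id} - \mathbf{R})^{-1}$ used to derive \cref{trewq}, so no genuinely new estimate is needed, and the whole argument reduces to isolating the $\tfrac{1}{2}\mathbf{Id}$ term as the source of strong monotonicity.
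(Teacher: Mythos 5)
Your proof is correct, but it is worth noting that the paper does not actually prove this proposition at all: its entire ``proof'' is a citation to \cite[Proposition~9.2]{alwadani2021thesis}. So what you have produced is a self-contained argument where the paper defers to an external result, and your route --- identify the solution set of \cref{fgfds98d} with $\zer\big(\mathbf{A}^{-1}+\mathbf{N}_{\mathbf{D}^{\perp}}+\tfrac{1}{2}\mathbf{Id}+\mathbf{T}\big)$ via \cref{fdsxwa}, then extract $\tfrac{1}{2}$-strong monotonicity from the $\tfrac{1}{2}\mathbf{Id}$ term --- is almost certainly the same mechanism underlying the cited thesis result, since uniqueness of Attouch--Th\'era dual solutions for the pair $(\mathbf{A},\mathbf{Id}-\mathbf{R})$ is classically obtained from strong (or at least strict) monotonicity of $(\mathbf{Id}-\mathbf{R})^{-1}$. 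Two small points deserve explicit mention in your write-up. First, the skewness of $\mathbf{T}$, i.e.\ $\scal{\mathbf{v}}{\mathbf{T}\mathbf{v}}=0$, is only guaranteed on $\mathbf{D}^{\perp}$ (where the decomposition of $(\mathbf{Id}-\mathbf{R})^{-1}$ lives); your argument is safe because $\dom\mathbf{N}_{\mathbf{D}^{\perp}}=\mathbf{D}^{\perp}$ forces $\mathbf{y}_1,\mathbf{y}_2\in\mathbf{D}^{\perp}$ and $\mathbf{D}^{\perp}$ is a subspace, so $\mathbf{y}_1-\mathbf{y}_2\in\mathbf{D}^{\perp}$, but this should be said rather than left implicit. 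Second, strong monotonicity of $(\mathbf{Id}-\mathbf{R})^{-1}$ can also be verified without invoking the decomposition: for $\mathbf{y}_k\in(\mathbf{Id}-\mathbf{R})\mathbf{x}_k$ and $\mathbf{v}=\mathbf{x}_1-\mathbf{x}_2$, unitarity of $\mathbf{R}$ gives
\begin{equation*}
\scal{\mathbf{y}_1-\mathbf{y}_2}{\mathbf{x}_1-\mathbf{x}_2}
=\normsq{\mathbf{v}}-\scal{\mathbf{R}\mathbf{v}}{\mathbf{v}}
=\tfrac{1}{2}\normsq{(\mathbf{Id}-\mathbf{R})\mathbf{v}}
=\tfrac{1}{2}\normsq{\mathbf{y}_1-\mathbf{y}_2},
\end{equation*}
so your conclusion holds even if one does not trust the precise statement of \cite[Theorem~2.8]{alwadani2024additiona} on skewness of $\mathbf{T}$. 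What your approach buys, then, is a proof readable inside this paper; what the paper's citation buys is brevity at the cost of self-containedness.
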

\begin{proof}
See \cite[Proposition 9.2]{alwadani2021thesis}.
\end{proof}
  
\begin{theorem}\label{drswas}\normalfont
Let $  \text{psol} ( \mathbf{A}, \mathbf{Id} - \mathbf{R} )   = \mathbf{Z} $ and recall from \cref{fdsxwa} that 
$$   \text{dsol} ( \mathbf{A}, \mathbf{Id} - \mathbf{R} )  =   \zer \Big( \mathbf{A}^{-1}  +  \mathbf{N}_{\mathbf{D}^{\perp}}  + \frac{1}{2} \mathbf{Id} + \mathbf{T} \Big).$$
Then 
\begin{equation}
 \text{dsol} \big( \mathbf{Id} - \mathbf{R} \big) = \big(  \mathbf{R}- \mathbf{Id}\big) \mathbf{Z} = \begin{cases}
\Big\{  \mathbf{J}_{2 (\mathbf{A}^{-1}  +  \mathbf{N}_{\mathbf{D}^{\perp}}  + \mathbf{T})} (\mathbf{0}) \Big\} , if \ \ \mathbf{Z} \neq \emptyset\\
\emptyset, \ \ \ \ \ \ \ \ \ \ \ \ \ \ \ \ \ \ \ \ \ \ \ \ \ \ \ \ \  \ \ \ \ if \ \  \mathbf{Z} = \emptyset
\end{cases}
\end{equation}
Moreover, if $\mathbf{y}^{*}:= \mathbf{J}_{2 (\mathbf{A}^{-1}  +  \mathbf{N}_{\mathbf{D}^{\perp}}  + \mathbf{T})} (\mathbf{0}) $ exists, then the following are hold:
\begin{enumerate}
\item\label{leen1} $\mathbf{y}^{*} \in \mathbf{D}^{\perp}$.
\item\label{leen2} $\mathbf{y}^{*}  $ is the only vector that makes 
$  \mathbf{A}^{-1} \mathbf{y} \cap - ( \mathbf{N}_{\mathbf{D}^{\perp}} \mathbf{y} + \frac{1}{2} \mathbf{y} + \mathbf{T} \mathbf{y}), $
nonempty
\item\label{leen3} $ \mathbf{Z}= \mathbf{A}^{-1} \mathbf{y}^{*} \cap ( - \frac{1}{2} \mathbf{y}^{*}  - \mathbf{T} \mathbf{y}^{*} - \mathbf{D} )$
\end{enumerate}
\end{theorem}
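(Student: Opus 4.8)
The plan is to read the dual solution set straight off the Attouch--Th\'era correspondence and then simplify the two ingredients: a resolvent reformulation and a normal-cone computation. First I would use \cref{equationfixed}, which characterizes $\mathbf{z} \in \mathbf{Z}$ by $0 \in \mathbf{A}(\mathbf{z}) + (\mathbf{Id} - \mathbf{R})(\mathbf{z})$. Because $\mathbf{Id} - \mathbf{R}$ is single-valued and linear, the Kuhn--Tucker vector witnessing this inclusion is forced: the only candidate dual solution attached to $\mathbf{z}$ is $\mathbf{y} = -(\mathbf{Id} - \mathbf{R})(\mathbf{z}) = (\mathbf{R} - \mathbf{Id})(\mathbf{z})$, and indeed $\mathbf{y} \in \mathbf{A}(\mathbf{z})$. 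Ranging over all primal solutions gives $\text{dsol}(\mathbf{A}, \mathbf{Id} - \mathbf{R}) = (\mathbf{R} - \mathbf{Id})\mathbf{Z}$, which is empty exactly when $\mathbf{Z} = \emptyset$.

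Next I would identify the nonempty case with a single resolvent value. Abbreviating $\mathbf{M} := \mathbf{A}^{-1} + \mathbf{N}_{\mathbf{D}^{\perp}} + \mathbf{T}$, the formula \cref{fdsxwa} reads $\text{dsol}(\mathbf{A}, \mathbf{Id} - \mathbf{R}) = \zer(\mathbf{M} + \tfrac{1}{2}\mathbf{Id})$, and the equivalences $0 \in \mathbf{M}\mathbf{y} + \tfrac{1}{2}\mathbf{y} \Leftrightarrow \mathbf{0} \in (\mathbf{Id} + 2\mathbf{M})\mathbf{y} \Leftrightarrow \mathbf{y} = \mathbf{J}_{2\mathbf{M}}(\mathbf{0})$ show that $\zer(\mathbf{M} + \tfrac{1}{2}\mathbf{Id}) = \{\mathbf{J}_{2\mathbf{M}}(\mathbf{0})\}$ whenever it is nonempty. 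Since \cref{hgsttttq} guarantees the dual set is at most a singleton, the displayed case distinction follows.

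Items \ref{leen1} and \ref{leen2} then cost little. For \ref{leen1}, the telescoping sum of the coordinates of $(\mathbf{R} - \mathbf{Id})\mathbf{z}$ vanishes, so $\ran(\mathbf{R} - \mathbf{Id}) \subseteq \mathbf{D}^{\perp}$ and hence $\mathbf{y}^* \in \mathbf{D}^{\perp}$; equivalently, $\mathbf{N}_{\mathbf{D}^{\perp}}\mathbf{y}^* \neq \emptyset$ forces $\mathbf{y}^* \in \dom \mathbf{N}_{\mathbf{D}^{\perp}} = \mathbf{D}^{\perp}$. For \ref{leen2}, I would unfold the nonemptiness of $\mathbf{A}^{-1}\mathbf{y} \cap -(\mathbf{N}_{\mathbf{D}^{\perp}}\mathbf{y} + \tfrac{1}{2}\mathbf{y} + \mathbf{T}\mathbf{y})$ into $0 \in \mathbf{A}^{-1}\mathbf{y} + \mathbf{N}_{\mathbf{D}^{\perp}}\mathbf{y} + \tfrac{1}{2}\mathbf{y} + \mathbf{T}\mathbf{y}$, i.e. $\mathbf{y} \in \text{dsol}(\mathbf{A}, \mathbf{Id} - \mathbf{R}) = \{\mathbf{y}^*\}$, so $\mathbf{y}^*$ is the unique such vector.

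The substantive step is \ref{leen3}, recovering $\mathbf{Z}$ from $\mathbf{y}^*$. Here I would invoke the Attouch--Th\'era recovery formula \cref{ton2} for the pair $(\mathbf{A}, \mathbf{Id} - \mathbf{R})$, giving $\mathbf{Z} = \mathbf{A}^{-1}\mathbf{y}^* \cap (\mathbf{Id} - \mathbf{R})^{-1}(-\mathbf{y}^*)$, and then evaluate the second factor through the decomposition $(\mathbf{Id} - \mathbf{R})^{-1} = \mathbf{N}_{\mathbf{D}^{\perp}} + \tfrac{1}{2}\mathbf{Id} + \mathbf{T}$ of \cite[Theorem 2.8(i)]{alwadani2024additiona}. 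Since $\mathbf{y}^* \in \mathbf{D}^{\perp}$ by \ref{leen1}, the normal cone $\mathbf{N}_{\mathbf{D}^{\perp}}(-\mathbf{y}^*)$ equals $(\mathbf{D}^{\perp})^{\perp} = \mathbf{D}$, while linearity of $\mathbf{T}$ gives $\mathbf{T}(-\mathbf{y}^*) = -\mathbf{T}\mathbf{y}^*$; hence $(\mathbf{Id} - \mathbf{R})^{-1}(-\mathbf{y}^*) = \mathbf{D} - \tfrac{1}{2}\mathbf{y}^* - \mathbf{T}\mathbf{y}^* = -\tfrac{1}{2}\mathbf{y}^* - \mathbf{T}\mathbf{y}^* - \mathbf{D}$ (using $\mathbf{D} = -\mathbf{D}$), and substituting back yields the claimed intersection. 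I expect the main obstacle to be exactly this normal-cone bookkeeping: one must confirm that $\mathbf{y}^*$ lands in $\mathbf{D}^{\perp}$ so that the set-valued part of $(\mathbf{Id} - \mathbf{R})^{-1}$ contributes the full diagonal $\mathbf{D}$ rather than being empty, and keep the affine shift $-\tfrac{1}{2}\mathbf{y}^* - \mathbf{T}\mathbf{y}^*$ correctly signed.
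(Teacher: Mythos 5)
Your proposal is correct, and for the displayed formula and \cref{leen1}--\cref{leen2} it runs along essentially the same lines as the paper: the paper also obtains $\text{dsol}(\mathbf{A},\mathbf{Id}-\mathbf{R})=(\mathbf{R}-\mathbf{Id})\mathbf{Z}$ from the Attouch--Th\'era pairing (it unfolds the dual inclusion $\mathbf{0}\in\mathbf{A}^{-1}(\mathbf{y})+(\mathbf{Id}-\mathbf{R})^{-1}(\mathbf{y})$ from the dual side, whereas you read the forced Kuhn--Tucker vector $\mathbf{y}=(\mathbf{R}-\mathbf{Id})\mathbf{z}$ off the primal inclusion; these are the two directions of the same equivalence), and it then derives the resolvent identity $\mathbf{y}=\mathbf{J}_{2(\mathbf{A}^{-1}+\mathbf{T}+\mathbf{N}_{\mathbf{D}^{\perp}})}(\mathbf{0})$ by exactly your regrouping, with \cref{hgsttttq} supplying uniqueness and the empty case handled via \cite[Proposition 2.4(v)]{bauschke2012attouch}. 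The genuine divergence is in \cref{leen1} and especially \cref{leen3}. For \cref{leen1} the paper cites $\dom(\mathbf{Id}-\mathbf{R})^{-1}=\mathbf{D}^{\perp}$ from \cite[Theorem 2.7]{alwadani2024additiona}, while your telescoping/normal-cone-domain argument proves the same membership from scratch; both are fine. For \cref{leen3} the paper simply invokes \cite[Proposition 9.3(i)]{alwadani2021thesis} with $\mathbf{N}_{\mathbf{C}}^{-1}$ replaced by $\mathbf{A}^{-1}$, whereas you rederive it inside the paper's own framework: the recovery formula \cref{ton2} gives $\mathbf{Z}=\mathbf{A}^{-1}\mathbf{y}^{*}\cap(\mathbf{Id}-\mathbf{R})^{-1}(-\mathbf{y}^{*})$, and the decomposition $(\mathbf{Id}-\mathbf{R})^{-1}=\mathbf{N}_{\mathbf{D}^{\perp}}+\tfrac{1}{2}\mathbf{Id}+\mathbf{T}$ plus $\mathbf{N}_{\mathbf{D}^{\perp}}(-\mathbf{y}^{*})=\mathbf{D}$ and $\mathbf{D}=-\mathbf{D}$ yields the claimed intersection. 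Your route buys self-containedness (it uses only results already stated in this paper plus the decomposition theorem the paper already relies on), at the cost of one extra hypothesis you should make explicit: you use that $\mathbf{T}$ is odd, i.e.\ $\mathbf{T}(-\mathbf{y}^{*})=-\mathbf{T}\mathbf{y}^{*}$, which holds because $\mathbf{T}$ is linear in the cited decomposition but is nowhere stated in the present paper. With that remark added, your argument is a complete and arguably preferable substitute for the paper's external citation.
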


\begin{proof}
By using \cref{SumSol2}, we have $\mathbf{y} \in \text{dsol} \big( \mathbf{A}, \mathbf{Id} - \mathbf{R} \big)$ gives 
\begin{align*}
& \ \ \ \ \ \ \mathbf{0} \in \mathbf{A}^{-1} (\mathbf{y}) + ( \mathbf{Id} -  \mathbf{R})^{-1} (\mathbf{y}) \ \ \ ( \forall \mathbf{z} \in \mathbf{Z})\\
& \Leftrightarrow \mathbf{z} \in  \mathbf{A}^{-1} (\mathbf{y}) \ \ \text{and} \ \ -  \mathbf{z} \in ( \mathbf{Id} -  \mathbf{R})^{-1} (\mathbf{y}) \ \ \ ( \forall \mathbf{z} \in \mathbf{Z})\\
&  \Leftrightarrow  \mathbf{y} \in \mathbf{A} ( \mathbf{z}) \ \ \text{and} \ \  \mathbf{y} = (  \mathbf{Id} -  \mathbf{R} ) ( - \mathbf{z}) \ \ \ ( \forall \mathbf{z} \in \mathbf{Z}),
\end{align*}
hence, for all $\mathbf{z} \in \mathbf{Z}$, we obtain 
$$  \mathbf{y} = \mathbf{Rz} - \mathbf{z}$$
and 
\begin{align*}
\text{dsol} \big( \mathbf{Id} - \mathbf{R} \big) & = \cup_{\mathbf{z} \in \mathbf{Z}} \{ \mathbf{R z} - \mathbf{z} \mid \mathbf{z} \in \mathbf{Z}\} \\
& = \big(  \mathbf{R} - \mathbf{Id} \big) \mathbf{Z}.
\end{align*}
Additionally,  if $\mathbf{Z} \neq \emptyset$, then using \cite[Theorem 2.4]{alwadani2024additiona} and \cref{quli1} gives 
\begin{align*}
& \ \ \ \ \ \ \ \ \mathbf{0} \in \mathbf{A}^{-1} (\mathbf{y}) + ( \mathbf{Id} - \mathbf{R})^{-1} ( \mathbf{y}) \\
& \Leftrightarrow \mathbf{0} \in \mathbf{A}^{-1} (\mathbf{y}) + \Big(  \frac{1}{2} \mathbf{Id} + \mathbf{T} + \mathbf{N}_{\mathbf{D}^{\perp}}\Big) ( \mathbf{y}) \\
& \Leftrightarrow \mathbf{0} \in \Big(   \mathbf{Id} + 2 \Big( \mathbf{A}^{-1} + \mathbf{T} + \mathbf{N}_{\mathbf{D}^{\perp}} \Big)\Big) ( \mathbf{y}) \\
&  \Leftrightarrow \mathbf{y} = \Big(   \mathbf{Id} + 2 \Big( \mathbf{A}^{-1} + \mathbf{T} + \mathbf{N}_{\mathbf{D}^{\perp}} \Big)\Big)^{-1} ( \mathbf{0}) \\
&  \Leftrightarrow \mathbf{y} =\mathbf{J}_{2( \mathbf{A}^{-1} + \mathbf{T} + \mathbf{N}_{\mathbf{D}^{\perp}} )} ( \mathbf{0})
\end{align*}
However, if $\mathbf{Z} = \emptyset$, then using \cite[Proposition 2.4~(v)]{bauschke2012attouch}
$$ \emptyset =  \text{dsol} \big(  \mathbf{A}, \mathbf{Id}- \mathbf{R}\big)= \text{dsol} \big( \mathbf{Id} - \mathbf{R} \big).  $$
\ref{leen1}: By \cite[Theorem 2.7]{alwadani2024additiona}, we have $ \dom ( \mathbf{Id} - \mathbf{R})^{-1} = \mathbf{D}^{\perp} $. This implies that $$\mathbf{y} \in \mathbf{D}^{\perp}.$$ \ref{leen2}: Combine \cref{hgsttttq} and \cite[Proposition~2.4]{bauschke2012attouch}.\\
\ref{leen3}: Combine \cref{leen1}, \cref{leen2}, and  \cite[Proposition 9.3 (i)]{alwadani2021thesis} where $\mathbf{N}^{-1}_{\mathbf{C}} $ is replaced by $\mathbf{A}^{-1}$.
\end{proof}

\begin{lemma}\label{tresdrd2}\normalfont Denote by $\mathbf{y}^{*} = \big( y_1, y_2, \cdots, y_m\big)$ the unique solution of \cref{trewq}. Then the following hold:
\begin{enumerate}
\item\label{yyy1} The mapping $\textrm{J}_1 : F_m \to F_1$ is bijective on $F_m$ and it is given by $\textrm{J}_1 \big( z\big) = z - y_{1}$. Moreover, for $1 \leq i \leq m-1$ the mapping $\textrm{J}_{i+1} : F_i \to F_{i+1}$ is bijective and is given by $\textrm{J}_{i+1} \big( z\big) = z - y_{i+1}$.
\item\label{yyy2} The fixed point sets $F_1 = F_m - y_1$ and $F_{i+1} = F_{i} - y_{i+1}$.
\end{enumerate}
\end{lemma}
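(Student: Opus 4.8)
The plan is to read off the coordinates of $\mathbf{y}^*$ from the displacement $(\mathbf{R}-\mathbf{Id})\mathbf{z}$ of an arbitrary cycle, and then transport the resulting identity into the component maps $\textrm{J}_{i+1}$ by means of the coordinate bijection $Q_i$.

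First I would recall from the proof of \cref{drswas} that, since $\text{dsol}(\mathbf{Id}-\mathbf{R})$ is at most a singleton (\cref{hgsttttq}) and equals $(\mathbf{R}-\mathbf{Id})\mathbf{Z}$, every cycle $\mathbf{z}=(z_1,\dots,z_m)\in\mathbf{Z}$ produces the \emph{same} vector $\mathbf{y}^*=\mathbf{R}\mathbf{z}-\mathbf{z}$. Writing this out coordinatewise with the shift from \cref{yy7} gives $y_1=z_m-z_1$ and $y_i=z_{i-1}-z_i$ for $2\le i\le m$; equivalently $y_{i+1}=z_i-z_{i+1}$ for $1\le i\le m-1$. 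This is the one identity on which everything rests.

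Second, for part~\ref{yyy1} I would fix $z\in F_i$. By \cref{hygtfre1}\ref{yy3} the restriction $Q_i|_{\mathbf{Z}}$ is a bijection onto $F_i$, so there is a unique cycle $\mathbf{z}$ with $z_i=z$, and by \cref{cydef} its next coordinate is $z_{i+1}=\textrm{J}_{i+1}z_i=\textrm{J}_{i+1}z$. Combining with the coordinate formula above yields $\textrm{J}_{i+1}(z)=z_{i+1}=z_i-y_{i+1}=z-y_{i+1}$, and the same computation at the wrap-around index ($z_1=z_m-y_1$) gives $\textrm{J}_1(z)=z-y_1$ for $z\in F_m$. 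Since $\textrm{J}_{i+1}(F_i)=F_{i+1}$ and $\textrm{J}_1(F_m)=F_1$ by \cref{hygtfre1}\ref{ff2}, each of these maps is surjective onto its stated target, while being a translation it is injective; hence it is bijective. Part~\ref{yyy2} then falls out at once: $F_{i+1}=\textrm{J}_{i+1}(F_i)=F_i-y_{i+1}$ and $F_1=\textrm{J}_1(F_m)=F_m-y_1$.

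The only genuinely delicate point is the first step, namely that the coordinate vector $\mathbf{y}^*=(y_1,\dots,y_m)$ does not depend on which cycle $\mathbf{z}$ is used. This is precisely where the singleton property of the dual solution (\cref{hgsttttq} together with \cref{drswas}) is indispensable: were $(\mathbf{R}-\mathbf{Id})\mathbf{z}$ to vary with $\mathbf{z}$, the shift $y_{i+1}$ in the translation would depend on the chosen point $z\in F_i$ and the clean affine description of $\textrm{J}_{i+1}$ would collapse. Everything after that is bookkeeping with the circular shift and the bijection $Q_i|_{\mathbf{Z}}$.
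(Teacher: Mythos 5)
Your proposal is correct and takes essentially the same route as the paper: both arguments hinge on obtaining the cycle through a given point of $F_i$ (you via the bijection $Q_i|_{\mathbf{Z}}$ of \cref{hygtfre1}~\ref{yy3}, the paper by constructing it explicitly), then invoking the uniqueness of the dual solution from \cref{drswas} and \cref{hgsttttq} to identify $\mathbf{y}^{*}=\mathbf{R}\mathbf{z}-\mathbf{z}$ independently of the chosen cycle, reading off coordinates to get $\textrm{J}_{i+1}(z)=z-y_{i+1}$, and using \cref{hygtfre1}~\ref{ff2} for surjectivity. The only organizational difference is that the paper proves injectivity of $\textrm{J}_1$ on $F_m$ directly from the fixed-point equations before deriving the translation formula, whereas you obtain injectivity for free once the map is identified as a translation --- a slightly cleaner ordering of the same ingredients.
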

\begin{proof}
\ref{yyy1}: Let $z$ and $\widetilde{z} $ be in $F_m$ satisfying that $\textrm{J}_1 z = \textrm{J}_1 \widetilde{z}$. Our goal is to show that $\textrm{J}_1$ is injective on $F_m$. Then, we have 
$$  z = \textrm{J}_m \textrm{J}_{m-1} \cdots \textrm{J}_{i} \textrm{J}_{i-1} \cdots \textrm{J}_1 z $$
and 
$$  \widetilde{z} = \textrm{J}_m \textrm{J}_{m-1} \cdots \textrm{J}_{i} \textrm{J}_{i-1} \cdots \textrm{J}_1 \widetilde{z}. $$
Since $\textrm{J}_1 z = \textrm{J}_1 \widetilde{z}$, then we obtain $z = \widetilde{z} $. Thus, $\textrm{J}_1$ is an injective mapping on $F_m$. Moreover, \cref{eeeesw1} shows that $\textrm{J}_1$ is a surjective mapping on $F_m$. Therefore, $\textrm{J}_1$ is a bijective mapping on $F_m$. For every $z \in F_{m}$, we have 
\begin{equation}\label{gdtw1}
z =  \textrm{J}_m \textrm{J}_{m-1} \cdots \textrm{J}_{i} \textrm{J}_{i-1} \cdots \textrm{J}_1 z.
\end{equation}
Set $z_1 = \textrm{J}_1 z, \ \ z_2 = \textrm{J}_2 z_1, \ \ \cdots, \ \ z_{m-1} = \textrm{J}_{m-1} z_{m-2}, \ \ z_m = \textrm{J}_m z_{m-1}$.
Therefore, using \cref{gdtw1}, we have $z = \textrm{J}_m z_{m-1}$ and 
$ \mathbf{z} = \big( z_1, z_2, \cdots, z_{m-1}, z\big)$. Therefore, \cref{drswas} gives 
$$\big( y_1, y_2, \cdots, y_m\big) = \big( z, z_{1}, z_{2}, \cdots, z_{m-1}\big) - \big( z_1, z_2, \cdots, z_{m-1}, z_{m}\big)$$ and therefore, $y_1 = z - z_1 \Rightarrow z_1 = z - y_1 \Rightarrow  \textrm{J}_1 z = z - y_1$. The proof of $ \textrm{J}_i$ is the same as $\textrm{J}_1$.\\
\ref{yyy2}: It follows from \ref{yyy1} that for every $z \in F_m$, we obtain $\textrm{J}_1 z = z - y_1$. Then by \cref{hygtfre1}\ref{ff2} we have $F_1 = F_{m} - y_1$. The proof for $F_{i+1} = F_{i} - y_{i+1}$ is the same as $F_1$. 
\end{proof}

\addcontentsline{toc}{section}{References}

\bibliographystyle{abbrv}

\end{document}